\documentclass[11pt]{article}
\usepackage{amssymb,amsthm,amsmath,latexsym,geometry,tikz,setspace,enumerate}
\geometry{a4paper,left=3cm,right=3cm, top=3cm, bottom=3cm} 
\newtheorem{theorem}{Theorem}

\newtheorem{corollary}[theorem]{Corollary}
\newtheorem{conjecture}[theorem]{Conjecture}
\newtheorem{claim}{Claim}

\begin{document}

\onehalfspace

\title{Induced Matchings in Graphs of Maximum Degree 4}

\author{Felix Joos\thanks{Institut f\"{u}r Optimierung und Operations Research, 
Universit\"{a}t Ulm, Ulm, Germany,
e-mail: \texttt{felix.joos@uni-ulm.de}}}

\date{}

\maketitle

\vspace{-0.5cm}


\begin{abstract}
For a graph $G$, let $\nu_s(G)$ be the induced matching number of $G$.
We prove the sharp bound $\nu_s(G)\geq \frac{n(G)}{9}$ for every graph $G$
of maximum degree at most $4$ and without isolated vertices that does not contain a certain blown up $5$-cycle as a component.
This result implies a consequence of the well known conjecture of Erd{\H{o}}s and Ne{\v{s}}et{\v{r}}il,
saying that the strong chromatic index $\chi_s'(G)$ of a graph $G$ is at most $\frac{5}{4}\Delta(G)^2$,
because $\nu_s(G)\geq \frac{m(G)}{\chi_s'(G)}$ and $n(G)\geq \frac{m(G)\Delta(G)}{2}$.
Furthermore, it is shown that there is polynomial-time algorithm that computes induced matchings of size at least $\frac{n(G)}{9}$.
\end{abstract}

{\small \textbf{Keywords:}  Induced matching; strong matching; strong chromatic index}\\
\indent {\small \textbf{AMS subject classification:}
05C70, 
05C15 
}

\section{Introduction}

For a graph $G$, a set $M$ of edges is an \emph{induced matching} of $G$ if
no two edges in $M$ have a common endvertex and no edge of $G$ joins two edges in $M$.
The maximum number of edges that form an induced matching in $G$ is the {\it strong matching number $\nu_s(G)$ of $G$}. 

Unlike the well investigated matching number \cite{lopl}, which can be determined in polynomial time \cite{ed},
it is known that the computation of the strong matching number is NP-hard even in very restricted graph classes as for example bipartite subcubic graphs \cite{ca1,lo,stva}.

The chromatic index $\chi'(G)$ and the strong chromatic index $\chi_s'(G)$ are the least numbers $k$
such that the edge set of $G$ can be partitioned in $k$ matchings and $k$ strong matchings, respectively.
While Vizing's Theorem gives $\chi'(G)\in\{\Delta(G),\Delta(G)+1\}$ \cite{vi}
where $\Delta(G)$ is the maximum degree of $G$,
no comparable result holds for the strong chromatic index.
In fact, Erd{\H{o}}s and Ne{\v{s}}et{\v{r}}il \cite{fashgytu2} conjectured $\chi_s'(G)\leq \frac{5}{4}\Delta(G)^2$,
which would be best-possible for even maximum degree 
and the graph obtained from a $5$-cycle by replacing every vertex by an independent set of order $\frac{\Delta(G)}{2}$.
In the case $\Delta(G)=4$, we denote this graph by $C_5^2$.
A simple greedy algorithm only gives $\chi_s'(G)\leq 2\Delta^2-2\Delta+1$,
and the best general result is due to Molloy and Reed who proved $\chi_s'(G)\leq 1.998\Delta(G)^2$ for sufficiently large maximum degree \cite{more}.

For subcubic graphs, Erd{\H{o}}s and Ne{\v{s}}et{\v{r}}il's conjecture was verified, to be precise $\chi_s'(G)\leq 10$ \cite{an, hoqitr}.
For $\Delta(G)=4$, Erd{\H{o}}s and Ne{\v{s}}et{\v{r}}il's conjecture claims $\chi_s'(G)\leq 20$ while the best known upper bound is $22$ \cite{cr}.
If $\Delta(G)\leq 4$, then their conjecture implies $\nu_s(G)\geq \frac{m(G)}{20}$.
In the present paper, I prove this consequence by showing $\nu_s(G)\geq \frac{n(G)}{10}$ if $\Delta(G)\leq 4$ and $G$ has no isolated vertices; note that $\frac{m(G)}{20}\leq \frac{\Delta(G)n(G)}{40}\leq \frac{n(G)}{10}$.
Furthermore, if $G$ does not contain $C_5^2$ as a component, then the result can be strengthened to $\nu_s(G)\geq \frac{n(G)}{9}$.
Both results are best possible.
Moreover, since the proof is constructive, it is easy to extract a polynomial-time algorithm which computes induced matchings of the guaranteed size.

For subcubic planar graphs, Kang, Mnich and M\"uller \cite{kamnmu} showed that $\nu_s(G)\geq \frac{m(G)}{9}$.
This was improved by Rautenbach et al.~\cite{jorasa} who proved $\nu_s(G)\geq \frac{m(G)}{9}$ for a subcubic graph $G$ without $K_{3,3}^+$ as a component
where $K_{3,3}^+$ is obtained from a $5$-cycle by replacing the vertices by independent sets of orders $1,1,1,2,$ and $2$, respectively; 
equivalently, $K_{3,3}^+$ is obtained from a $K_{3,3}$ by subdividing exactly one edge once.
In particular, $\nu_s(G)\geq \frac{m(G)}{9}=\frac{n(G)}{6}$ for a cubic graph $G$.
Recently, I proved  $\nu_s(G)\geq \frac{n(G)}{(\lceil\frac{\Delta}{2}\rceil+1) (\lfloor\frac{\Delta}{2}\rfloor+1)}$
if $G$ is a graph of sufficiently large maximum degree $\Delta$ and without isolated vertices \cite{jo}.

Our main result is the following.

\begin{theorem}\label{result four detail}
If $G$ is a graph of maximum degree at most $4$,
then
\begin{align*}\label{result4}
	\nu_s(G)\geq \frac{n(G)-i(G)-n_5(G)}{9}
\end{align*}
where $n_5(G)$ is the number of components of $G$ that are isomorphic to $C_5^2$
and $i(G)$ is the number of isolated vertices of $G$.
\end{theorem}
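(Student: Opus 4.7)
The plan is to prove Theorem~\ref{result four detail} by induction on $n(G)$, with the substance being a local replacement argument. Two clean-up reductions come first: if $G$ has an isolated vertex $v$, removing $v$ decreases both $n(G)$ and $i(G)$ by one and leaves $\nu_s(G)$ unchanged, so induction applies; if $G$ has a component $C\cong C_5^2$, then $\nu_s(G)=\nu_s(G-C)+1$, $n(G-C)=n(G)-10$, and $n_5(G-C)=n_5(G)-1$, so the numerator $n-i-n_5$ drops by exactly~$9$ and induction again closes the case. After these reductions one may assume that $G$ has no isolated vertex and no $C_5^2$-component, so the bound to be proved reduces to $\nu_s(G)\ge n(G)/9$.

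The heart of the argument is the following local statement: \emph{there exist an edge $uv$ of $G$ and a set $S\subseteq V(G)$ with $N[u]\cup N[v]\subseteq S$, $|S|\le 9$, such that $G-S$ still has no isolated vertex and no $C_5^2$-component.} Granted this, any induced matching of $G-S$ can be augmented by the edge $uv$ (the requirement $N[u]\cup N[v]\subseteq S$ forbids any edge of $G-S$ from touching $\{u,v\}$ or their neighbours), so the inductive hypothesis yields $\nu_s(G)\ge \nu_s(G-S)+1\ge (n(G)-|S|)/9+1\ge n(G)/9$, as required.

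To establish the local statement I would case-split on the minimum degree of the component under consideration. If some vertex has degree at most~$3$, then any edge $uv$ incident to it satisfies $|N[u]\cup N[v]|\le 7$, leaving at least two ``free slots'' in $S$; these can be spent to absorb would-be isolates (vertices outside $N[u]\cup N[v]$ whose neighbours lie entirely in $N(u)\cup N(v)$) and to break any nascent $C_5^2$-component of $G-S$ by removing one of its vertices, with minor adjustments of the choice of $uv$ as needed. In the complementary case the component is $4$-regular, and any edge $uv$ without a common neighbour gives $|N[u]\cup N[v]|=8$, leaving only a single free slot. This tight $4$-regular sub-case is where the principal obstacle lies: the entire reason for excluding $C_5^2$ is that there every edge $uv$ leaves two would-be isolates (the vertices of the ``opposite'' independent set of size~$2$) and the local step provably fails. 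I would attack it extremally, choosing $uv$ to minimise (for example) the number of such would-be isolates, and then show via a careful analysis of the edges inside $G[N(u)\cup N(v)]$ and of the second neighbourhood of $uv$ — or via a discharging argument — that if even this minimum exceeds one, the local obstruction propagates around the component and forces $G\cong C_5^2$, contradicting the assumption and completing the proof.
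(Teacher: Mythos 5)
Your overall architecture---induction on the order, reduction to connected graphs with no isolated vertices and no $C_5^2$-component, and then a local step that finds an edge $uv$ whose closed neighbourhood can be deleted at a cost of at most $9$ vertices---is exactly the skeleton of the paper's proof (phrased there as a minimum counterexample together with the inequality $|S|+i(G')\le 9k$). The reductions and the augmentation step $\nu_s(G)\ge\nu_s(G-S)+1$ are fine, and your worry about creating $C_5^2$-components in $G-S$ is actually vacuous: such a component would be $4$-regular, hence send no edge to $S$, hence already be a component of the connected graph $G$.

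The gap is that the local statement, which is the entire content of the theorem, is not proved; what you offer in its place is a plan containing claims that are either unjustified or quantitatively wrong. In the case $\delta\le 3$ you assert that an edge at a vertex of degree at most $3$ leaves ``two free slots'' which suffice to absorb the would-be isolated vertices; but a set $S$ with $|S|=7$ can send up to $15$ edges outside, so a priori as many as seven vertices of $G-S$ can have all their neighbours in $S$, and two slots are nowhere near enough without a long structural analysis. The paper needs sixteen separate claims (driving the minimum degree up from $1$ to $4$ one step at a time, with repeated ``re-mark a different edge'' arguments and ad hoc verification that certain order-$10$ graphs satisfy $\nu_s\ge 2$) to close exactly this case. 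In the $4$-regular case you assume an edge with no common neighbour exists, which need not hold (every edge of a $4$-regular graph can lie in a triangle), and you then assert that if every edge leaves at least two would-be isolates then $G\cong C_5^2$, to be shown ``via a careful analysis \dots\ or via a discharging argument.'' That assertion is precisely the hard core of the theorem; the paper proves it by first establishing triangle-freeness, then classifying the possible graphs induced on $N(u)\cup N(v)\setminus\{u,v\}$ (a $C_4$, $P_4$, $P_3$, $P_2$ or edgeless, each eliminated under an extremal choice of $uv$ maximising the number of internal edges), and finally concluding that $G$ has girth at least $5$, whence no vertex outside $S$ has three neighbours in $S$ and no isolated vertices arise at all. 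None of this is carried out in your proposal, so as written it is an outline of the correct strategy rather than a proof.
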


\noindent
Let $G$ be a graph with $\Delta(G)\leq 4$.
Since the graph $C_5^2$ has order $10$,
we obtain $n_5(G)\leq \frac{n(G)-i(G)}{10}$.
Moreover,
$\Delta(G)\leq 4$ implies $n(G)-i(G)\geq \frac{m(G)}{2}$.
Therefore, $\nu_s(G) \geq\frac{m(G)}{20}$ and if $n_5(G)=0$, then $\nu_s(G) \geq\frac{n(G)-i(G)}{9}$ and hence $\nu_s(G) \geq \frac{m(G)}{18}$.

In view of the graph $C_5^2$ and the graph obtained from a triangle by attaching two pendent vertices at every vertex, respectively,
Theorem \ref{result four detail} is best-possible.
However, I was not able to construct a graph $G$ without a component isomorphic to $C_5^2$ such that $\nu_s(G)= \frac{m(G)}{18}$.
Let $H$ be the graph obtained from a $5$-cycle by replacing the vertices by independent sets of orders $1,1,1,3,$ and $3$, respectively.
If $G$ is the graph obtained from two disjoint copies of $H$ by identifying the unique vertices of degree~$2$ in the two copies,
then $\nu_s(G)=\frac{34}{17}=\frac{m(G)}{17}$.

\begin{conjecture}
If $G$ is a graph of maximum degree at most $4$ and no component is isomorphic to $C_5^2$,
then $\nu_s(G)\geq \frac{m(G)}{17}$.
\end{conjecture}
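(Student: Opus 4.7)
The plan is to strengthen Theorem~\ref{result four detail} into a hybrid inequality that blends the vertex bound with an explicit edge bound. The observation $\nu_s(G)\geq m(G)/18$ follows from the vertex inequality $n(G)-i(G)\geq m(G)/2$, which is tight only for $4$-regular graphs, whereas the extremal example in the excerpt---two copies of the blown-up $5$-cycle $H$ with part sizes $(1,1,1,3,3)$ identified at the unique degree-$2$ vertex---is \emph{not} $4$-regular and still forces $\nu_s(G)=m(G)/17$. Hence the missing slack between $1/18$ and $1/17$ cannot be recovered by a degree bound alone. I would aim for an inequality of the form
\[
\nu_s(G)\;\geq\;\frac{a\,m(G)+b\bigl(n(G)-i(G)\bigr)-c\,n_5(G)}{d},
\]
with positive weights chosen so that the bound reduces to Theorem~\ref{result four detail} when the average degree is low and to $m(G)/17$ when the average degree is high.

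First, I would revisit the proof of Theorem~\ref{result four detail} and catalogue each reduction step (elimination of small-degree vertices, contraction of short cycles, removal of dense local blocks) together with two invariants simultaneously: the number of vertices removed per produced matching edge, and the number of edges removed per produced matching edge. The existing argument controls only the former; verifying that the latter ratio never exceeds $17$ is the cleanest route to the conjectured bound, and it should proceed reduction-by-reduction with only mild additional bookkeeping.

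Second, I would classify the local configurations at which the edge-to-matching ratio is exactly $17$. The example above suggests these are near-$C_5^2$ blocks, in particular blown-up $5$-cycles whose part sizes differ from $(2,2,2,2,2)$ by redistribution, most notably $(1,1,1,3,3)$. Since $C_5^2$ itself is excluded by hypothesis, the expectation is a short explicit list of such extremal blocks, each traceable to a specific reduction that becomes tight. The subcubic analogue in \cite{jorasa} identified the single obstruction $K_{3,3}^+$, and one hopes for a similarly tidy catalogue here. A discharging argument on the residual graph would then close the gap: each vertex of degree less than $4$ donates a fixed slack that is either converted locally into an additional induced matching edge or funnelled into a listed extremal block where the ratio of $17$ is already achieved.

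The main obstacle I anticipate is the classification in the second step. Unlike the subcubic case, where the single obstruction $K_{3,3}^+$ fully governs the extremal behaviour, with maximum degree $4$ there is room for several candidate extremal blocks of comparable density, and I expect that listing them and ruling out further unknown configurations will require a substantial case analysis. Without such a classification the discharging accounting cannot be balanced; as a fallback, one might try a fractional relaxation of the induced matching polytope or a weighted probabilistic argument layered on top of Theorem~\ref{result four detail} in order to absorb the remaining slack between $1/18$ and $1/17$, but such a global argument seems considerably more delicate than the reduction approach sketched above.
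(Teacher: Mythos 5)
The statement you were asked about is an open conjecture in the paper, not a theorem: the author explicitly states that he could not construct a non-$C_5^2$ example attaining $m(G)/18$, exhibits the graph obtained by gluing two copies of the $(1,1,1,3,3)$ blow-up of $C_5$ at their degree-$2$ vertices to show that $m(G)/17$ would be sharp, and offers no proof. So there is nothing in the paper to compare your argument against. More importantly, what you have written is not a proof but a research programme, and every step that would carry mathematical weight is deferred rather than carried out: the hybrid inequality with weights $a,b,c,d$ is never instantiated with actual values, let alone established; the central claim that the edge-to-matching ratio never exceeds $17$ across the reductions is stated as something to be verified, not verified; the classification of extremal blocks is explicitly flagged by you as an anticipated obstacle; and no discharging rules are written down. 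There is no argument here whose correctness can be checked.

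Beyond the fact that the work is not done, there is a concrete structural difficulty with the plan itself. The paper's proof of Theorem \ref{result four detail} is not a sequence of local reductions each of which produces a matching edge at a bounded cost in removed edges; it is a minimal-counterexample argument whose accounting is $|S|+i(G')\leq 9k$ for $k\in\{1,2\}$ marked edges, and several of its claims (for instance Claims \ref{c11}, \ref{c13}, and \ref{c14}) close by showing that the entire graph has order $10$ and admits two independent edges, while the final contradiction uses a global girth condition. There is no per-step edge invariant to "catalogue"; obtaining one would require redoing the whole induction with a mixed functional in $m$ and $n$, and the configurations that are extremal for the edge count need not coincide with those extremal for the vertex count (the glued example shows the tight cases live at degree-$2$ cut vertices of blown-up $5$-cycles, which the current proof eliminates early via Claims \ref{c4}--\ref{c8}). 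So the "mild additional bookkeeping" you are counting on is unlikely to suffice, and the conjecture should be regarded as genuinely open.
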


\noindent
We use standard notation and terminology.
For a graph $G$, let $V(G)$ and $E(G)$ be its vertex set and edge set, respectively.
Let the \textit{order} and the \textit{size} of $G$ be defined by $|V(G)|$ and $|E(G)|$, respectively.
For a vertex $v$ of $G$, let $d_G(v)$ be its degree, 
$N_G(v)$ be the set of neighbors of $v$, and
$N_G[v]=N_G(v)\cup \{v\}$. 
If the graph is
clear from the context, we only write $d(v)$, $N(v)$, and $N[v]$, respectively.
If $d_G(v)=k$ holds for a non-negative integer $k$, then we say that $v$ is a \textit{degree}-$k$ vertex in $G$.
A set $I$ of vertices of $G$ is \textit{independent} if  no edge of $G$ joins two vertices in $I$.
Two edges $e$ and $f$ are independent if they do not share a common vertex and there is no edge that is adjacent to $e$ and $f$.
The rest of the paper is devoted to the proof of Theorem \ref{result four detail}.

\section{Proof of Theorem \ref{result four detail}}

For a contradiction, we assume that $G$ is a counterexample of minimum order.
Since the statement of the theorem is linear in terms of the components, $G$ is connected.
It is easy to see that $n_5(G)=i(G)=0$.
By a sequence of claims, we establish several properties of $G$ in order to derive a final contradiction.
All claims follow a common pattern.
We mark particular (pairwise independent) edges and delete all vertices $S$ of $G$ at distance at most 1 from these edges.
We denote the resulting graph by $G'$.
Note that $n_5(G')=0$.
By the choice of $G$, we know that $\nu_s(G')\geq \frac{1}{9}(n(G')-i(G'))$.
Afterwards, we obtain a contradiction by considering a maximum induced matching of $G'$ together with the marked edges of $G$;
we only have to show that $|S|+i(G')\leq 9k$ where $k$ is the number of marked edges.
In all our cases $k$ is $1$ or $2$.
Throughout the proof we denote by $I'$ the set of isolated vertices of $G'$.
Note that a vertex in $I'$ has all its neighbors in $S$.

\begin{claim}\label{c1}
If $v$ is a vertex of degree at least $2$,
then $v$ is adjacent to at least two vertices of degree at least $2$.
\end{claim}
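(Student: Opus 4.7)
The plan is to apply the common pattern described just before the claim, with $k=1$ marked edge. Assume for contradiction that some vertex $v$ has $d=d_G(v)\geq 2$ but at most one neighbor of degree at least $2$; then $v$ has at least $d-1\geq 1$ leaf neighbors. Fix such a leaf $u$ and mark the edge $uv$. The set to be removed is $S=N_G[u]\cup N_G[v]=N_G[v]$ (since $N_G[u]=\{u,v\}\subseteq N_G[v]$), so $|S|=d+1\leq 5$.

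The key step is to bound $i(G')$, where $G'=G-S$. If $x\in V(G)\setminus S$ is isolated in $G'$, then $N_G(x)\subseteq S=\{v\}\cup N_G(v)$. Now $v\notin N_G(x)$, because otherwise $x\in N_G(v)\subseteq S$, and no leaf neighbor of $v$ can lie in $N_G(x)$, because such a leaf has only $v$ as its neighbor. Hence every neighbor of $x$ is a non-leaf neighbor of $v$, and by hypothesis there is at most one such neighbor, say $w$. Since $i(G)=0$ forces $d_G(x)\geq 1$, we conclude $N_G(x)=\{w\}$, so $x$ is a leaf attached to $w$.

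If $v$ has no non-leaf neighbor, no such $x$ exists and connectedness of $G$ forces $V(G)=S$, hence $n(G)\leq 5$, contradicting $n(G)\geq 10$ (which holds because $1\leq \nu_s(G)<n(G)/9$). Otherwise the leaf neighbors of $w$ form a subset of $N_G(w)\setminus\{v\}$, which has size at most $d_G(w)-1\leq 3$; so $i(G')\leq 3$. Thus $|S|+i(G')\leq (d+1)+3\leq 8<9=9k$, and the framework yields $\nu_s(G)\geq \nu_s(G')+1\geq \frac{n(G)-|S|-i(G')}{9}+1>\frac{n(G)}{9}$, contradicting that $G$ is a counterexample.

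I expect this to be the most straightforward of the structural claims: the inequality $|S|+i(G')\leq 9$ carries a full unit of slack, and the only care needed is ruling out extra ways a vertex outside $S$ could become isolated, which is handled cleanly by the leaf status of the other neighbors of $v$.
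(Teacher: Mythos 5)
Your proof is correct and follows essentially the same route as the paper: mark a pendant edge $uv$ at $v$, observe $|S|\leq 5$, and note that any vertex isolated by the deletion must be adjacent to the unique non-leaf neighbor $w$ of $v$, giving $i(G')\leq d_G(w)-1\leq 3$ and hence $|S|+i(G')\leq 8\leq 9$. Your handling of the degenerate case (no non-leaf neighbor, so $G$ is a small star) is a cleaner version of the paper's same observation.
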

\begin{proof}[Proof of Claim \ref{c1}]
For a contradiction, we assume that $v$ is adjacent to at most one vertex $w$ of degree at least $2$.
If $w$ does not exist, then $G$ is a path of order $3$, which is a contradiction.
Thus we may assume that $w$ exists.
Let $u$ be a degree-$1$ vertex adjacent to $v$ and we mark the edge $uv$.
Recall that $S$ is the set of vertices of $G$ that are at distance at most $1$ to some marked edge.
Thus $|S|\leq 5$.
Moreover,
all isolated vertices of $G'=G-S$ are adjacent to $w$.
This implies that $|S|+i(G')\leq 8$, which is a contradiction and completes the proof of Claim \ref{c1}.
\end{proof}

\begin{claim}\label{c2}
If $u_1$ and $u_2$ are distinct degree-$1$ vertices,
then $u_1$ and $u_2$ do not have a common neighbor.
\end{claim}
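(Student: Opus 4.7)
The plan is to follow the marking-and-counting strategy used for Claim \ref{c1}. Suppose for a contradiction that the distinct pendants $u_1$ and $u_2$ have a common neighbor $v$. Claim \ref{c1} applied to $v$ says that $v$ has at least two neighbors of degree at least $2$; since $u_1, u_2$ are pendants, these must be two further neighbors of $v$, and $\Delta(G)\le 4$ then forces $d(v)=4$. Write $w_1, w_2$ for the two non-pendant neighbors of $v$.

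I would mark the single edge $u_1v$. Then $S=N[u_1]\cup N[v]=\{u_1,u_2,v,w_1,w_2\}$ has size $5$, so it suffices to prove $i(G')\le 4$. The vertices $u_1, u_2$, and $v$ have all their $G$-neighbors inside $S$, so any vertex of $I'$ is adjacent in $G$ only to $w_1$ and/or $w_2$. Classifying such vertices as pendants in $N(w_1)\setminus\{v\}$ (call this count $p_1$), pendants in $N(w_2)\setminus\{v\}$ (count $p_2$), or degree-$2$ common neighbors of $w_1$ and $w_2$ (count $q$), we have $i(G')=p_1+p_2+q$. Applying Claim \ref{c1} to each $w_i$ gives $p_i\le d(w_i)-2\le 2$, while the disjointness of pendants and common neighbors inside $N(w_i)\setminus\{v\}$ yields $p_i+q\le d(w_i)-1\le 3$. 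A short case check on these inequalities shows $i(G')\le 5$, with equality only when $p_1=p_2=2$ and $q=1$.

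If $i(G')\le 4$, the usual bookkeeping $|S|+i(G')\le 9$ yields the required contradiction with the minimality of $G$. The main obstacle is the tight case $i(G')=5$: the equality conditions force $d(w_1)=d(w_2)=4$, each $w_i$ to have exactly two pendant neighbors outside $\{v\}$, and $w_1, w_2$ to share exactly one common neighbor, which has degree $2$; in particular $w_1\not\sim w_2$. These $10$ vertices then exhaust the component of $G$ containing $v$, so by connectedness $G$ is this specific $10$-vertex graph. I would finish by exhibiting an induced matching of size $2$ directly: taking $w_1a$ and $w_2d$ for a pendant neighbor $a$ of $w_1$ and a pendant neighbor $d$ of $w_2$, the four endpoints are distinct and no edge of $G$ joins $\{w_1,a\}$ to $\{w_2,d\}$. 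Hence $\nu_s(G)\ge 2\ge n(G)/9$, contradicting that $G$ is a counterexample.
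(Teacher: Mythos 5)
Your proposal is correct and follows essentially the same route as the paper: mark $u_1v$, get $|S|=5$, use Claim \ref{c1} to bound the number of isolated vertices hanging off $w_1,w_2$, and handle the single tight case $i(G')=5$ by observing that $G$ is then a specific order-$10$ graph admitting an induced matching of size $2$. Your version just spells out the counting ($p_1,p_2,q$) and the final matching more explicitly than the paper does.
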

\begin{proof}[Proof of Claim \ref{c2}]
Assume for a contradiction that $v$ is the common neighbor of $u_1$ and $u_2$.
By Claim \ref{c1},
$v$ has degree $4$.
Let $w_1$ and $w_2$ be the neighbors of $v$ beside $u_1$ and $u_2$.
We mark the edge $u_1v$.
This implies that $|S|=5$.
By Claim \ref{c1}, $w_1$ and $w_2$ are adjacent to at most two degree-$1$ vertices;
that is, if $i(G')\geq 5$, 
then $i(G')=5$, $w_1$ and $w_2$ are adjacent to two degree-$1$ vertices, respectively, and there is a degree-$2$ vertex adjacent to both $w_1$ and $w_2$; 
thus $G$ is a graph of order $10$ and $\nu_s(G)=2$, which is a contradiction.
Therefore, $i(G')\leq 4$ and hence $|S|+i(G')\leq 9$, which is a contradiction.
\end{proof}

\begin{claim}\label{c3}
If $u_1$ and $u_2$ are degree-$1$ vertices,
then ${\rm dist}(u_1,u_2)\not=4$.
\end{claim}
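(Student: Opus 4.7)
The plan is to follow the template established by Claims \ref{c1} and \ref{c2}. I would assume for contradiction that there are degree-$1$ vertices $u_1, u_2$ with ${\rm dist}(u_1, u_2) = 4$, fix a shortest path $u_1 v_1 v_2 v_3 u_2$, and mark the two edges $u_1 v_1$ and $u_2 v_3$. These are independent because ${\rm dist}(u_1, u_2) = 4$ forbids any shortcut edge among $\{u_1, v_1, v_3, u_2\}$, so $k = 2$. Claim \ref{c1} forces each of $v_1, v_3$ to have at least two neighbors of degree $\geq 2$ besides its degree-$1$ neighbor, so $d(v_1), d(v_3) \geq 3$. Since $N[u_i] \subseteq N[v_i]$, we have $S = N[v_1] \cup N[v_3]$, and $v_2$ lies in the intersection, giving $|S| \leq (d(v_1)+1) + (d(v_3)+1) - 1 \leq 9$, with equality only when $d(v_1) = d(v_3) = 4$ and $v_2$ is their unique common neighbor.

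The main obstacle is to bound $i(G')$. The only vertices of $S$ that can have neighbors outside $S$ lie in $\{v_2\} \cup (N(v_1) \setminus \{u_1, v_2\}) \cup (N(v_3) \setminus \{u_2, v_2\})$, a set of at most five vertices whose external-degree caps are $2, 3, 3, 3, 3$, so at most $14$ edges leave $S$. Letting $|I_j|$ denote the number of isolated vertices of $G'$ whose degree in $G$ equals $j$, each such vertex sends all of its edges into $S$, giving $|I_1| + 2|I_2| + 3|I_3| + 4|I_4| \leq 14$. Applying Claim \ref{c2} to each of the five special vertices above (the degree-$1$ neighbors of $v_1$ and $v_3$ are the already-removed $u_1$ and $u_2$) yields at most one degree-$1$ neighbor apiece in $V(G)\setminus S$, so $|I_1| \leq 5$. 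A straightforward optimization under these two constraints produces $i(G') \leq 9$, with extremal profile $|I_1|=5$ and $|I_2|=4$.

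Combining, $|S| + i(G') \leq 18 = 9k$, and then the inductive hypothesis applied to $G'$ (using $n_5(G') = 0$) gives $\nu_s(G) \geq 2 + \frac{n(G') - i(G')}{9} \geq \frac{n(G)}{9}$, contradicting the choice of $G$ as a counterexample. In the subcases where $d(v_1)$ or $d(v_3)$ equals $3$, or where $v_1$ and $v_3$ share additional common neighbors beyond $v_2$, both $|S|$ and the external-edge budget shrink in tandem, so the bound $|S| + i(G') \leq 18$ holds with room to spare.
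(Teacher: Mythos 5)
Your proposal is correct and follows essentially the same route as the paper: mark the two pendant edges at the ends of the distance-$4$ path, bound $|S|\leq 9$, count at most $14$ edges leaving $S$, use Claim \ref{c2} to cap the number of degree-$1$ isolated vertices at five, and conclude $i(G')\leq 9$ so that $|S|+i(G')\leq 18$. The extra details you supply (independence of the two marked edges, the explicit optimization over degree profiles) are correct elaborations of what the paper leaves implicit.
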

\begin{proof}[Proof of Claim \ref{c3}]
Assume for a contradiction that ${\rm dist}(u_1,u_2)=4$ and let $v_i$ be the neighbor of $u_i$ for $i\in\{1,2\}$.
We mark $u_1v_1$ and $u_2v_2$ and hence $|S|\leq 9$.
Note that, by Claim \ref{c2}, there are at most five degree-$1$ vertices in $V(G)\setminus S$ adjacent to a vertex in $S$. 
Furthermore, there are at most $14$ edges joining $S$ and vertices of $G'$.
Thus $i(G')\leq 9$ and hence $|S|+i(G')\leq 18$.
\end{proof}

\begin{claim}\label{c4}
$\delta(G)\geq 2$.
\end{claim}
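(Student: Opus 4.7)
The plan is to derive a contradiction from assuming $G$ contains a degree-$1$ vertex $u$ with neighbour $v$. Applying Claim 1 to $v$ (which has $d(v) \geq 2$, since $G$ is a minimum connected counterexample and not $P_2$) forces $v$ to have at least two neighbours of degree at least $2$; as $d(u) = 1$, this gives $d(v) \geq 3$. Write $N(v) = \{u, w_1, \ldots, w_r\}$ with $r = d(v) - 1 \in \{2, 3\}$ and each $d(w_i) \geq 2$. Following the pattern of the earlier claims, I would mark the single edge $uv$, giving $S = N[v]$ of size $r + 2$.

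To bound $i(G')$, observe that any $z \in V(G) \setminus S$ isolated in $G' = G - S$ satisfies $N_G(z) \subseteq \{w_1, \ldots, w_r\}$. Let $a$ count such $z$ with $d_G(z) = 1$ and $b$ those with $d_G(z) \geq 2$. Claim 2 gives $a \leq r$, and counting edges from $\{w_1, \ldots, w_r\}$ to $V(G) \setminus S$ yields $a + 2b \leq \sum_i d(w_i) - r - 2 E_W \leq 3r - 2 E_W$, where $E_W$ denotes the number of edges inside $\{w_1, \ldots, w_r\}$. Combining gives $i(G') \leq 2r - E_W$, hence $|S| + i(G') \leq 3r + 2 - E_W$. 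For $d(v) = 3$ this is at most $8$, and for $d(v) = 4$ with $E_W \geq 2$ it is at most $9$, yielding the required contradiction $|S|+i(G') \leq 9$ in both cases.

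The main obstacle is the remaining case $d(v) = 4$, $E_W \leq 1$. My plan is to refine as follows. If $a = 0$, then $i(G') = b \leq \lfloor (9 - 2 E_W)/2 \rfloor \leq 4$, which already gives $|S| + i(G') \leq 9$. Otherwise $a \geq 1$, so there is a degree-$1$ vertex $u'$ adjacent to some $w_i$ (giving $\mathrm{dist}(u, u') = 3$). If $V(G) \setminus S$ contains an edge, I would mark a carefully chosen such edge together with $uv$ as an independent pair, using Claim 3 (which forces any second-layer neighbour outside $S$ to have degree at least $2$) to push a parallel edge-count through to $|S'| + i(G'') \leq 18$ for the resulting two-edge marking. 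If instead every vertex of $V(G) \setminus S$ is isolated in $G'$, then $n(G) = |S| + i(G') \leq 11$, so the counterexample hypothesis would force $\nu_s(G) \leq 1$; but a brief enumeration of the possible distributions of $(a,b)$ subject to $a \leq 3$, $b \leq 4$ and $a + 2b \leq 9 - 2 E_W$ (using $E_W \leq 1$ to locate a non-adjacent pair $w_i, w_j$ each having a Type-$A$ or Type-$B$ neighbour disjoint from the other) lets me exhibit two independent edges in $G$ directly, contradicting $\nu_s(G) \leq 1$. The delicate step will be the selection of a suitable second independent edge in the first sub-subcase, ensuring that its closed neighbourhood does not erode the available slack.
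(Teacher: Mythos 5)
There is a genuine gap. Your edge-counting framework matches the paper's, and the cases $d(v)\leq 3$, $E_W\geq 2$, and $a=0$ are handled correctly. But the remaining case ($d(v)=4$, $E_W\leq 1$, $a\geq 1$) is exactly where the work lies, and your plan for it does not close. First, your invocation of Claim~3 is off-target: a vertex at distance $4$ from $u$ lies in the \emph{second} layer beyond $S$ (distance $3$ from $v$), whereas the vertices of $I'$ are at distance $3$ from $u$, so ``second-layer neighbours have degree $\geq 2$'' says nothing about $i(G')$. Second, the two-edge marking cannot be pushed through by a ``parallel edge-count'': every vertex of $I'$ has all its neighbours in $S$, hence is disjoint from $N[x]\cup N[y]$ for any edge $xy$ of $G'$, so all of $I'$ (already $\geq 5$ vertices in the case you are fighting) remains isolated in $G''$ while $|S''|$ can reach $5+8=13$; you would need the second edge to create \emph{no} new isolated vertices and lose no slack, and at this stage of the induction (only Claims 1--3 available) nothing you have said guarantees such an edge exists. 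A crude count of edges leaving $S''$ gives bounds far above $18$.

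The missing idea is the paper's actual use of Claim~3: two degree-$1$ vertices pendant on \emph{non-adjacent} neighbours $w_i,w_j$ of $v$ are at distance exactly $4$ from each other (via $w_i$--$v$--$w_j$), which Claim~3 forbids. Combined with Claim~2 this gives $a\leq 1$ when $E_W=0$ and $a\leq 2$ when $E_W=1$. Then $E_W=1$ yields $i(G')\leq\lfloor(2+7)/2\rfloor=4$, and $E_W=0$ yields $i(G')\leq 5$ with equality only in one explicit configuration: $a=1$, $b=4$, all nine edges from $\{w_1,w_2,w_3\}$ used, hence $n(G)=10$, where two independent edges (the pendant edge $z_0w_1$ together with $yw_2$ for a non-neighbour $y$ of $w_1$ in $I'$) are exhibited directly. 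Your fallback enumeration for the ``all of $V(G)\setminus S$ isolated'' subcase is essentially this last step and is workable, but the other subcase is neither proved vacuous (it is, but only via the Claim~3 application you skipped) nor handled by an argument that works as stated.
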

\begin{proof}[Proof of Claim \ref{c4}]
Assume for a contradiction that there is a degree-$1$ vertex $u$ in $G$ and let $v$ be its neighbor.
We mark $uv$.
If $d_G(v)\leq 3$, then Claim \ref{c2} immediately  implies $|S|+i(G')\leq 8$.
Thus we may assume that $w_1,w_2,w_3$ are the neighbors of $v$ beside $u$ and hence $|S|=5$.

Suppose $\{w_1,w_2,w_3\}$ is an independent set in $G$.
By Claim \ref{c2} and \ref{c3}, there is at most one degree-$1$ vertex in $V(G)\setminus S$ adjacent to a vertex in $S$.
Since there are at most nine edges joining $S$ and vertices of $G'$,
we have $i(G')\geq 5$ only if $G$ is a graph of order $10$ and exactly one degree-$1$ vertex and four degree-$2$ vertices are adjacent to $w_1,w_2,w_3$;
this implies that $\nu_s(G)\geq 2\geq \frac{n(G)}{9}$, which is a contradiction.

Suppose now that $G[\{w_1,w_2,w_3\}]$ is not a triangle.
By Claim \ref{c2} and \ref{c3}, there are at most two degree-$1$ vertices in $V(G)\setminus S$ adjacent to a vertex in $S$.
Since there are at most seven edges joining $S$ and vertices of $G'$,
we conclude $i(G')\leq 4$.

Suppose now that $G[\{w_1,w_2,w_3\}]$ is a triangle.
By Claim \ref{c3}, there are at most three degree-$1$ vertices in $V(G)\setminus S$ adjacent to a vertex in $S$.
Since there are at most three edges joining $S$ and vertices of $G'$,
we conclude $i(G')\leq 3$.
\end{proof}


\begin{claim}\label{c5}
Degree-$2$ vertices are adjacent only to degree-$4$ vertices.
\end{claim}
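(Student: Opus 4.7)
The plan is to argue by contradiction: suppose some degree-$2$ vertex $v$ has a neighbor $w$ with $d_G(w)\leq 3$. By Claim \ref{c4} we know $\delta(G)\geq 2$, so $d_G(w)\in\{2,3\}$. Following the same template as the earlier claims, I will mark the single edge $vw$ (so $k=1$) and show that $|S|+i(G')\leq 9$, which contradicts the minimality of $G$.

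Let $u$ be the other neighbor of $v$ and set $S=N_G[v]\cup N_G[w]$, so that $S$ consists of $v$, $w$, $u$, and the neighbors of $w$ distinct from $v$. Hence $|S|\leq 4$ when $d_G(w)=2$ and $|S|\leq 5$ when $d_G(w)=3$; any coincidence (e.g.\ $u$ also being a neighbor of $w$, giving a triangle through $vw$) only shrinks $|S|$ and helps. To control $i(G')$ I will use an edge count. By Claim \ref{c4} every isolated vertex of $G'$ has at least two neighbors, all of which lie in $S$; therefore $i(G')\leq \tfrac12 e_G(S,V(G)\setminus S)$, where $e_G(S,V(G)\setminus S)$ denotes the number of edges of $G$ between $S$ and its complement. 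The vertices $v$ and $w$ contribute nothing, since all their neighbors already lie in $S$, and each of the remaining $|S|-2$ vertices of $S$ has degree at most $4$ and at least one neighbor in $\{v,w\}\subseteq S$, so it sends at most $3$ edges out of $S$. Thus $e_G(S,V(G)\setminus S)\leq 3(|S|-2)$.

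Putting these bounds together yields $|S|+i(G')\leq 4+3=7$ when $d_G(w)=2$ and $|S|+i(G')\leq 5+4=9$ when $d_G(w)=3$ (using that $i(G')\leq\lfloor 9/2\rfloor=4$ is an integer), in each case meeting the threshold $9k=9$ and producing the required contradiction.

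The only delicate point is that in the case $d_G(w)=3$ the estimate is tight, with no slack to spare. This is not actually a problem: as already seen in Claim \ref{c3}, hitting the threshold with equality is enough to contradict the minimum counterexample assumption, so the crude counting above suffices and I do not have to enter a sub-case analysis of the local structure around $u,w_1,w_2$.
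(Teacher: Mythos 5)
Your proof is correct and follows essentially the same route as the paper: mark the edge between the degree-$2$ vertex and its low-degree neighbor, bound $|S|\leq 5$, count at most $3(|S|-2)\leq 9$ edges leaving $S$, and use Claim \ref{c4} to conclude $i(G')\leq 4$, so $|S|+i(G')\leq 9$ suffices for the contradiction. The only difference is cosmetic: you split into the subcases $d_G(w)=2$ and $d_G(w)=3$, while the paper handles both at once with the single bound $|S|\leq 5$.
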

\begin{proof}[Proof of Claim \ref{c5}]
We assume for a contradiction that $u$ is a degree-$2$ vertex and $v$ is a neighbor of $u$ such that $d_G(v)\leq 3$.
We mark $uv$ and hence $|S|\leq 5$.
This implies that at most nine edges join $S$ and vertices of $G'$.
By Claim \ref{c4}, this implies $i(G')\leq 4$.
\end{proof}

\begin{claim}\label{c6}
Every degree-$4$ vertex is adjacent to at most two degree-$2$ vertices.
\end{claim}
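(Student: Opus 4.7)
The plan follows the template of the preceding claims. Suppose for contradiction that some degree-$4$ vertex $v$ has three degree-$2$ neighbors $u_1,u_2,u_3$, and let $u_4$ be the fourth neighbor of $v$. For $i\in\{1,2,3\}$ let $w_i$ denote the neighbor of $u_i$ other than $v$; by Claim \ref{c5} each $w_i$ is a degree-$4$ vertex.

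I would mark the single edge $u_1v$, so that $S\subseteq\{v,u_1,w_1,u_2,u_3,u_4\}$ and hence $|S|\leq 6$. Claim \ref{c5} rules out $w_1\in\{u_2,u_3\}$ (both degree-$2$), so either $w_1=u_4$ (giving $|S|=5$) or the six listed vertices are all distinct (giving $|S|=6$). In the collapsed subcase $w_1=u_4$, only $u_2,u_3,$ and $u_4=w_1$ have neighbors outside $S$, and they contribute at most $1+1+2=4$ edges leaving $S$; since Claim \ref{c4} implies that each vertex of $I'$ absorbs at least two such edges, this yields $i(G')\leq 2$ and hence $|S|+i(G')\leq 7$, the desired contradiction.

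The main subcase is $|S|=6$, where the target is $i(G')\leq 3$. Counting the edges from $S$ to $V(G')$ gives at most $(d(u_2)-1)+(d(u_3)-1)+(d(u_4)-1)+(d(w_1)-1)\leq 1+1+3+3=8$ (note that $v$ and $u_1$ have no neighbors outside $S$), so Claim \ref{c4} only guarantees $i(G')\leq 4$. To exclude $i(G')=4$ I would argue that the estimate must then be tight, so every $x\in I'$ satisfies $d_G(x)=2$, and by Claim \ref{c5} both neighbors of $x$ are degree-$4$ vertices. Because $N(v)=\{u_1,u_2,u_3,u_4\}\subseteq S$, no vertex of $V(G')$ is adjacent to $v$, so the two neighbors of $x$ lie in the set of degree-$4$ vertices of $S\setminus\{v\}$, which is contained in $\{u_4,w_1\}$; in particular $d(u_4)=4$ and each $x\in I'$ is a common neighbor of $u_4$ and $w_1$. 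But $w_1$ has only $3$ neighbors apart from $u_1$, which bounds the number of such $x$ by $3$, contradicting $|I'|=4$.

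The main obstacle is exactly this refinement step: the straightforward edge count in the main subcase falls one short of the target, yielding $i(G')\leq 4$ rather than the required $\leq 3$. The structural leverage needed to close this gap is that Claim \ref{c5} forces any isolated degree-$2$ vertex of $G'$ to have both its neighbors among the degree-$4$ vertices of $S\setminus\{v\}$, which collapses the enumeration onto the common neighborhood of $u_4$ and $w_1$.
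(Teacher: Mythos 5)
Your proof is correct and follows essentially the same route as the paper: mark an edge from $v$ to a degree-$2$ neighbor, observe $|S|\leq 6$ with at most eight edges leaving $S$, and use Claim \ref{c4} together with Claim \ref{c5} to rule out $i(G')\geq 4$. The only cosmetic difference is in the endgame: the paper notes directly that a degree-$2$ vertex of $S$ would then be adjacent to a degree-$2$ vertex of $I'$ (contradicting Claim \ref{c5}), whereas you reach the same contradiction by locating all of $I'$ in the common neighborhood of $u_4$ and $w_1$.
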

\begin{proof}[Proof of Claim \ref{c6}]
Assume for a contradiction that there is a degree-$4$ vertex $v$ which has at least three neighbors of degree $2$.
Let $u$ be one of these neighbors and mark $uv$.
Thus $|S|\leq 6$.
Note that at most eight edges join $S$ and $V(G)\setminus S$.
Since $i(G')\geq 4$ implies that all isolated vertices of $G'$ have degree $2$ in $G$.
Thus a degree-$2$ vertex of $S$ and a degree-$2$ vertex of $I'$ share a common edge and this contradicts Claim \ref{c5}.
Thus we may assume that $i(G')\leq 3$ and so $|S|+i(G')\leq 9$, which is a contradiction.
\end{proof}

\begin{claim}\label{c7}
Every degree-$4$ vertex is adjacent to at most one degree-$2$ vertex.
\end{claim}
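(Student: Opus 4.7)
For contradiction, suppose some degree-$4$ vertex $v$ has two degree-$2$ neighbors $u_1,u_2$; let $w_1,w_2$ be the other neighbors of $v$. The assumption together with Claim \ref{c6} forces $w_1,w_2$ to have degree $3$ or $4$, and Claim \ref{c5} forces the vertex $u_i'$ adjacent to $u_i$ other than $v$ to have degree $4$. Following the template of the previous claims, I would mark the edge $u_1v$ and set $S=N[u_1]\cup N[v]\subseteq\{u_1,v,u_1',u_2,w_1,w_2\}$, so that $|S|\le 6$; the aim is to establish the contradiction $|S|+i(G')\le 9$.

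The core of the proof is a slot-count on the possible neighbors in $S$ of an isolated vertex $y$ of $G'$. Since $u_1$ and $v$ have their entire $G$-neighborhoods inside $S$, no $y\in I'$ is adjacent to $u_1$ or $v$; any $y$ adjacent to $u_2$ must equal $u_2'$. Combined with Claim \ref{c5}, this gives the following trichotomy: a degree-$2$ isolated vertex has both neighbors among the degree-$4$ members of $\{u_1',w_1,w_2\}$; a degree-$3$ isolated vertex has all three neighbors in $\{u_1',w_1,w_2\}$; and a degree-$4$ isolated vertex is forced to equal $u_2'$ and to be adjacent to each of $u_1',w_1,w_2$. Moreover, Claim \ref{c6} applied to $u_1'$ (which already has $u_1$ as a degree-$2$ neighbor) allows at most one further degree-$2$ neighbor of $u_1'$, and at most two for each $w_i$ of degree $4$.

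The analysis then splits into cases based on coincidences among $u_1',u_2',w_1,w_2$. In the principal case (all four distinct and $u_2'\notin S$), the out-edges of $S$ comprise one edge from $u_2$, three from $u_1'$, and $d(w_i)-1\le 3$ from each $w_i$. Writing $k_r$ for the number of degree-$r$ isolated vertices of $G'$, the restrictions above give $k_4\le 1$, $k_2\le 2$ (via Claim \ref{c6}), and the slot balance $2k_2+3k_3+4k_4\le 10$; when $k_4=1$ the four slots consumed by $u_2'$ sharpen this to $2k_2+3k_3\le 6$, and when $k_4=0$ the unusable $u_2$-out-edge (going to a non-isolated $u_2'$) sharpens it to $2k_2+3k_3\le 9$. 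A direct check of feasible $(k_2,k_3,k_4)$ under the additional bounds $k_2\le 2$, $k_3\le 3$ gives $i(G')=k_2+k_3+k_4\le 3$. The coincidence cases $u_2'=u_1'$ (a $K_{2,2}$ on $\{v,u_1'\}$ and $\{u_1,u_2\}$, in which Claim \ref{c6} now excludes any further degree-$2$ neighbor of $u_1'$), $u_2'\in\{w_1,w_2\}$, and $u_1'\in\{w_1,w_2\}$ (a triangle at $v$, which shrinks $|S|$ to at most $5$) admit analogous, slightly easier slot counts.

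The main technical obstacle is the principal case: the raw bound $e_{\mathrm{out}}(S)\le 10$ on its own only forces $i(G')\le 5$, which is insufficient. It is the combination of Claim \ref{c5} (restricting a degree-$2$ isolated vertex to two degree-$4$ neighbors in $\{u_1',w_1,w_2\}$, and in particular forbidding $u_2$ as such a neighbor) and Claim \ref{c6} (bounding the degree-$2$ neighbors of $u_1',w_1,w_2$ by $1,2,2$ respectively) that compresses $i(G')$ down to the needed value $3$.
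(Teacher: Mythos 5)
Your proposal is correct and follows essentially the same route as the paper: mark $u_1v$, take $S=N[u_1]\cup N[v]$ with $|S|\le 6$, bound the edges from $S$ to $I'$ by $10$, and combine Claim \ref{c5} (degree-$2$ vertices see only degree-$4$ vertices) with Claim \ref{c6} applied to the degree-$4$ vertices of $S'$ to force $i(G')\le 3$. The paper phrases the endgame as a targeted contradiction (the neighbor $w=u_1'$ of $u_1$ would acquire three degree-$2$ neighbors), while you run an explicit $(k_2,k_3,k_4)$ slot count, but the ingredients and the bound obtained are the same.
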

\begin{proof}[Proof of Claim \ref{c7}]
Assume for a contradiction that there is a degree-$4$ vertex $v$ with two neighbors $u_1,u_2$ of degree $2$.
Let $w$ be the neighbor of $u_1$ beside $v$.
We mark $u_1v$ and hence $|S|\leq 6$.
If $|S|\leq 5$, then there are at most six edges joining $S$ and $V(G)\setminus S$ and hence $i(G')\leq 3$.
Thus we may assume $|S|=6$.
For a contradiction, we assume that $i(G')\geq 4$.
Note that at most $10$ edges join $S$ and $I'$.
Suppose $u_2$ is adjacent to a vertex in $I'$, then, by Claim \ref{c5}, $I'$ contains a vertex of degree $4$.
Thus $I'$ contains three degree-$2$ vertices.
However, $w$ is adjacent to two of them and hence in total adjacent to at least three degree-$2$ vertices, which is a contradiction to Claim \ref{c6}.
Thus we may assume that $u_2$ is not adjacent to a vertex in $I'$.
Note that $I$ either contains four degree-$2$ vertices or three degree-$2$ vertices and one degree-$3$ vertex.
In both cases $w$ is adjacent to at least two degree-$2$ vertices in $I'$, which is a contradiction to Claim \ref{c6}.
\end{proof}

\begin{claim}\label{c8}
$\delta(G)\geq 3$.
\end{claim}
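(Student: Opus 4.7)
The plan is to show that $G$ contains no degree-$2$ vertex. Assume for contradiction that $u$ is a degree-$2$ vertex with neighbors $v_1, v_2$. By Claims~\ref{c5} and~\ref{c7}, $d(v_1) = d(v_2) = 4$ and $u$ is the unique degree-$2$ neighbor of each $v_i$; let $w_1, w_2, w_3$ be the remaining neighbors of $v_1$, and observe that Claim~\ref{c7} applied to $v_1$ forces $d(w_j) \geq 3$ for each $j$.

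I would mark the edge $uv_1$, so that $S = N[u] \cup N[v_1] = \{u, v_1, v_2, w_1, w_2, w_3\}$ with $|S| = 6$, and aim to show $i(G') \leq 3$ to reach the contradiction $|S| + i(G') \leq 9$. Every $y \in I'$ satisfies $N_G(y) \subseteq \{v_2, w_1, w_2, w_3\}$, since $N_G(u) \cup N_G(v_1) \subseteq S$. Applying Claims~\ref{c5} and~\ref{c7} once more, $v_2$ has no degree-$2$ neighbor in $I'$, and two degree-$2$ vertices of $I'$ would together require four distinct degree-$4$ partners inside $\{w_1, w_2, w_3\}$, so at most one degree-$2$ vertex lies in $I'$.

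Counting edges from $\{v_2, w_1, w_2, w_3\}$ to $V(G) \setminus S$ yields at most $(d(v_2)-1) + \sum_j (d(w_j)-1) \leq 12$, whereas $i(G') \geq 4$ would force at least $2 + 3 \cdot 3 = 11$ such edges. The resulting tight equality forces all $w_j$ to have degree $4$, makes $\{v_2, w_1, w_2, w_3\}$ independent, and sends every edge leaving $\{v_2, w_1, w_2, w_3\}$ into $I'$. By the connectedness of $G$ this gives $V(G) = S \cup I'$ with $n(G) = 10$. In the forced adjacency pattern the bipartite graph between $\{w_1, w_2, w_3\}$ and the three vertices of $I'$ of degree at least $3$ that are adjacent to $v_2$ is $2$-regular and is therefore a $6$-cycle, so there exist $j$ and $y_i \in I'$ with $v_2 \sim y_i$ and $y_i \not\sim w_j$. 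Then $\{v_1 w_j, v_2 y_i\}$ is an induced matching of size $2$, giving $\nu_s(G) \geq 2 \geq n(G)/9$, contradicting the choice of $G$.

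The main obstacle will be handling cleanly the two subcases (whether or not $I'$ contains a degree-$2$ vertex) and the degree profiles of $w_1, w_2, w_3$, but once the edge-count forces $V(G) = S \cup I'$ and the bipartite structure to be a $6$-cycle rather than $K_{3,3}$, the required induced matching can be exhibited directly.
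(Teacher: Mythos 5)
Your overall strategy (mark $uv_1$, bound $i(G')$ by counting edges from $\{v_2,w_1,w_2,w_3\}$ to $I'$, and extract a size-$2$ induced matching in the extremal case) is the same as the paper's, but the counting step has a genuine gap. Your bound is ``at least $2+3\cdot 3=11$ edges needed versus at most $12$ available,'' which leaves a slack of one edge, so tight equality is \emph{not} forced. Concretely, in the subcase where $I'$ consists of one degree-$2$ vertex $y$ (both neighbors in $\{w_1,w_2,w_3\}$) and three degree-$3$ vertices, exactly $11$ edges go to $I'$ and the twelfth available endpoint may be an edge from some $w_j$ to a vertex $z\notin S\cup I'$. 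Then $\{v_2,w_1,w_2,w_3\}$ is still independent, but $V(G)\neq S\cup I'$ and $n(G)\geq 11$ (indeed $G$ may continue arbitrarily far beyond $z$), so your conclusion ``$V(G)=S\cup I'$ with $n(G)=10$'' fails, and exhibiting an induced matching of size $2$ no longer contradicts anything: $\nu_s(G)\geq 2$ only beats $n(G)/9$ when $n(G)\leq 18$, while the marked edge still gives $|S|+i(G')=10>9$. (Your assertion that tightness ``forces all $w_j$ to have degree $4$'' is also backwards: a degree-$3$ vertex $w_j$ is exactly what makes the count tight when $I'$ contains a degree-$2$ vertex.)

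The paper closes precisely this hole by choosing the degree-$2$ vertex $u$ to be contained in a $C_4$ \emph{if possible}. A degree-$2$ vertex of $I'$ has both neighbors among $\{w_1,w_2,w_3\}$ and hence lies in a $C_4$ through $v_1$; in the critical case $G[S]$ is a tree, so $u$ lies in no $C_4$, and the extremal choice of $u$ then rules out any degree-$2$ vertex in $I'$. This makes the count genuinely tight ($12$ needed versus $12$ available) and forces $n(G)=10$, after which the size-$2$ induced matching finishes the argument, much as in your last paragraph. You would need either this extremal choice of $u$ or a separate argument (e.g., marking a second independent edge that reaches $z$) to handle the slack case. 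A second, smaller omission: you assume $|S|=6$, i.e., $v_2\notin\{w_1,w_2,w_3\}$; the case $v_1v_2\in E(G)$ gives $|S|=5$ and must be dispatched separately (easy: at most $8$ edges then leave $S$, so $i(G')\leq 4$ and $|S|+i(G')\leq 9$).
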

\begin{proof}[Proof of Claim \ref{c8}]
For a contradiction, we assume that there is a degree-$2$ vertex $u$ and if possible choose $u$ to be contained in a $C_4$.
Let $v,w$ be the neighbors of $u$.
We mark $uv$ and hence $|S|\leq 6$.
If $|S|\leq 5$, then at most eight edges join $S$ and $I'$, which implies that $i(G')\leq 4$, which is a contradiction.
Thus we may assume $|S|=6$.
If the graph $G[S]$ has size at least $7$,
then there are at most eight edges joining $S$ and $I'$ and because $w$ is only adjacent to vertices of degree at least $3$ (Claim \ref{c7}),
we obtain $i(G')\leq 3$, which is a contradiction.

Suppose now that $G[S]$ is a graph of size $6$.
Hence at most $10$ edges join $S$ and $I'$.
Assume for contradiction that $i(G')\geq 4$. 
If $i(G')\geq 5$, then Claim \ref{c7} yields the contradiction.
Thus we assume that $i(G')=4$ and hence $I'$ contains at least two degree-$2$ vertices $x,y$.
By Claim \ref{c7}, $x,y$ have distinct neighbors in $S$ and hence $w$ is adjacent to $x$ or $y$.
However, $w$ is adjacent to $u$, which is a contradiction to Claim \ref{c7}.

Thus we may assume that $G[S]$ is a graph of size $5$; that is, $G[S]$ is a tree and thus $u$ is not contained in a $C_4$.
Moreover, by our choice of $u$, no degree-$2$ vertex is contained in a $C_4$.
This implies that $I'$ contains no degree-$2$ vertex 
because such a vertex cannot be adjacent to $w$ (Claim \ref{c7}) and if both neighbors in $S$ are distinct from $w$, then it is contained in a $C_4$.
Note that at most $12$ edges join $S$ and $I'$.
If $i(G')\geq 4$, then $i(G')=4$ and all vertices in $I'$ are degree-$3$ vertices and all vertices in $S\setminus\{u,v\}$ are degree-$4$ vertices.
Thus $n(G)=10$ and $\nu_s(G)\geq 2$, which is a contradiction.
\end{proof}

\begin{claim}\label{c9}
The set of degree-$3$ vertices is an independent.
\end{claim}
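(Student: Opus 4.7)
The plan is to argue by contradiction along the template of the previous claims. Assume that $u$ and $v$ are adjacent degree-$3$ vertices; by Claim \ref{c8} every vertex of $G$ has degree at least $3$. I mark the single edge $uv$ and set $S := N_G[u] \cup N_G[v]$, so $|S|\leq 4+4-|\{u,v\}|=6$, with equality exactly when $u$ and $v$ share no common neighbor. The goal is to establish $|S| + i(G') \leq 9$.

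If $|S| \leq 5$, then $u$ and $v$ have a common neighbor, and $G[S]$ contains at least the $5$ edges $uv$ together with the four edges from $u, v$ to their remaining neighbors. Since the sum of degrees in $S$ is at most $6 + 4(|S|-2) \leq 18$, at most $18-10=8$ edges leave $S$; as every vertex in $I'$ has $G$-degree at least $3$ entirely into $S$, this forces $i(G') \leq 2$ and $|S| + i(G') \leq 7$, a contradiction.

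The main case is $|S| = 6$. Write $S = \{u, v, a_1, a_2, b_1, b_2\}$ with $\{a_1, a_2\} = N(u)\setminus\{v\}$ and $\{b_1, b_2\} = N(v)\setminus\{u\}$, and let $e'$ denote the number of edges of $G[\{a_1,a_2,b_1,b_2\}]$. The same degree count yields at most $22 - 2(5+e') = 12 - 2e'$ edges leaving $S$, so $i(G') \geq 5$ is impossible (it would need at least $15$ such edges), while $i(G') \leq 3$ already gives $|S| + i(G') \leq 9$. The real obstacle is the subcase $i(G') = 4$: the count then forces $e' = 0$, forces each of $a_1, a_2, b_1, b_2$ to have degree $4$ with its three neighbors outside $\{u,v\}$ lying in $I'$, and forces each of the four vertices of $I'$ to have degree exactly $3$ with all neighbors in $\{a_1, a_2, b_1, b_2\}$; by the connectedness of $G$ this gives $V(G) = S \cup I'$ of order exactly $10$.

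To close this subcase I would exhibit an induced matching of size $2$ directly in $G$. The bipartite graph between $\{a_1, a_2, b_1, b_2\}$ and $I'$ is $3$-regular on $4+4$ vertices, that is, $K_{4,4}$ minus a perfect matching, so each $a_i$ has a unique non-neighbor $m(a_i) \in I'$ and $m(a_1) \neq m(a_2)$. Then $a_1 m(a_2)$ and $a_2 m(a_1)$ are edges of $G$, and the four potential connecting edges $a_1 a_2$, $m(a_1) m(a_2)$, $a_1 m(a_1)$, $a_2 m(a_2)$ are all absent (the first two because $\{a_1, a_2, b_1, b_2\}$ and $I'$ are each independent in $G$, the latter two by the definition of $m$). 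Hence $\nu_s(G) \geq 2 > 10/9 = n(G)/9$, contradicting the choice of $G$ as a counterexample.
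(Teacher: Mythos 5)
Your proposal is correct and follows essentially the same route as the paper: mark the edge $uv$, bound the number of edges leaving $S$, reduce the only problematic case to $|S|=6$, $i(G')=4$, $n(G)=10$ with $G[S']$ edgeless and the $S'$--$I'$ bipartite graph $3$-regular, and then exhibit an induced matching of size $2$. The paper dismisses this last configuration with ``it is easy to see that $\nu_s(G)\geq 2$''; your explicit construction via the $K_{4,4}$-minus-a-perfect-matching structure is exactly the detail being elided there.
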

\begin{proof}[Proof of Claim \ref{c9}]
Assume for a contradiction that two degree-$3$ vertices $u,v$ are adjacent.
We mark $uv$.
Note that $|S|\leq 6$ and
at most $12$ edges join $S$ and $I'$.
If $|S|+i(G')\geq 10$, then $|S|=6$, $i(G')=4$, all vertices in $I'$ and $S\setminus\{u,v\}$ are degree-$3$ and degree-$4$ vertices, respectively, and $n(G)=10$.
It is easy to see that $\nu_s(G)\geq 2$, which is a contradiction.
\end{proof}

\begin{claim}\label{c10}
No degree-$3$ vertex is contained in a triangle.
\end{claim}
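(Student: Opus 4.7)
Assume for contradiction that a degree-$3$ vertex $u$ lies on a triangle with vertices $v$ and $w$. The first observation is a degree-pinning step: since $u$ has degree $3$, Claim \ref{c9} forces $d(v), d(w) \neq 3$, and Claim \ref{c8} together with $\Delta(G)\leq 4$ then gives $d(v)=d(w)=4$. So I know a lot about the local structure already before invoking the deletion argument.

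Following the standard pattern of the paper, I would mark the single edge $uv$. Let $x$ denote the third neighbor of $u$, and let $y_1,y_2$ denote the two neighbors of $v$ distinct from $u$ and $w$. Then $S=N[u]\cup N[v]\subseteq \{u,v,w,x,y_1,y_2\}$. The vertices $w,x,y_1,y_2$ are all distinct from $\{u,v\}$, and $w$ is distinct from $x,y_1,y_2$ by definition; the only possible coincidences are $x=y_1$ or $x=y_2$. Thus $|S|\in\{5,6\}$.

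The key point that makes the triangle useful is that \emph{both} endpoints of the marked edge have all of their neighbors inside $S$: indeed $N(u)=\{v,w,x\}\subseteq S$ and $N(v)=\{u,w,y_1,y_2\}\subseteq S$. So only $w,x,y_1,y_2$ can send edges out of $S$, and the number of such edges is at most
\begin{align*}
 (d(w)-2)+(d(x)-1)+(d(y_1)-1)+(d(y_2)-1)\leq 2+3+3+3=11
\end{align*}
when $|S|=6$. By Claim \ref{c8}, every vertex of $I'$ has all $\geq 3$ of its neighbors in $S$, hence $i(G')\leq\lfloor 11/3\rfloor=3$, giving $|S|+i(G')\leq 9$, which contradicts the choice of $G$ as a counterexample exactly as in the earlier claims.

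In the remaining case $|S|=5$, where (say) $x=y_1$, the same computation is even better: $x=y_1$ now has two neighbors $u,v$ in $S$, so it contributes at most $2$ outgoing edges, and the total bound drops to $2+2+3=7$, yielding $i(G')\leq 2$ and $|S|+i(G')\leq 7$. I do not expect any serious obstacle here; the only thing to be slightly careful about is bookkeeping the possible identifications among $w,x,y_1,y_2$, which I handled above. The real content of the argument is that the triangle guarantees $u$ and $v$ contribute nothing to edges leaving $S$, which is precisely what saves one unit compared to a generic marked edge.
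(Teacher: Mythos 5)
Your proposal is correct and follows exactly the paper's argument for this claim: mark $uv$, observe $|S|\leq 6$, count at most $11$ edges leaving $S$ (the triangle forcing both $u$ and $v$ to have all neighbors inside $S$), and conclude $i(G')\leq 3$ via Claim \ref{c8}. You merely spell out the edge count and the possible coincidences among $w,x,y_1,y_2$ in more detail than the paper does.
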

\begin{proof}[Proof of Claim \ref{c10}]
Assume for a contradiction that a degree-$3$ vertex $u$ is contained in a triangle $uvwu$.
We mark $uv$.
Note that $|S|\leq 6$ and at most $11$ edges join $S$ and $I'$.
This implies that $i(G')\leq 3$.
\end{proof}

\begin{claim}\label{c11}
$G$ is not a graph of order $10$ and minimum degree $3$.
\end{claim}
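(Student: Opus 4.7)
The plan is to derive a contradiction from the hypotheses $n(G)=10$ and $\delta(G)\geq 3$ by showing $\nu_s(G)\geq 2$; since $G$ is a minimum counterexample and $n_5(G)=i(G)=0$, we have $\nu_s(G)<n(G)/9=10/9$, i.e.\ $\nu_s(G)\leq 1$, so any induced matching of size $2$ produces the desired contradiction. My approach is to exhibit an induced path on five vertices $v_1v_2v_3v_4v_5$, whose endpoint edges $v_1v_2$ and $v_4v_5$ form an induced matching of size $2$.

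I would begin by choosing an edge $uv$ with $d(u)=d(v)=4$. Such a $(4,4)$-edge exists: denote by $n_i$ the number of degree-$i$ vertices of $G$; Claim~\ref{c9} forces every edge at a degree-$3$ vertex to land at a degree-$4$ vertex, so by a degree sum the number of $(4,4)$-edges equals $(4n_4-3n_3)/2$, which is at least $6$ for the only admissible values $n_3\in\{0,2,4\}$ (obtained from $n_3+n_4=10$ together with $3n_3$ even). Since $uv\in E(G)$ and $\Delta(G)\leq 4$, we have $|N[u]\cup N[v]|\leq d(u)+d(v)=8<n(G)$, so there exists $w\in V(G)\setminus(N[u]\cup N[v])$; a shortest path from $\{u,v\}$ to $w$ (of length at least $2$, since $w\notin N[u]\cup N[v]$) yields an induced $P_3$ or $P_4$. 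Extending this path on either side by a further neighbor of $v$ (or $u$) and a further neighbor of $w$, all of which exist because $\delta(G)\geq 3$, produces the desired induced $P_5$.

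The main obstacle is controlling chords during the extension step: whenever a would-be chord threatens to spoil the intended induced $P_5$, I would use Claims~\ref{c9} and~\ref{c10} to constrain its endpoints. Any chord forming a triangle is all-degree-$4$ by Claim~\ref{c10}, and any chord between two vertices of the path involves at most one degree-$3$ vertex by Claim~\ref{c9}. Combined with $\Delta(G)\leq 4$ and $n(G)=10$, these constraints leave only a short list of problematic configurations, each of which can be resolved either by swapping the extending neighbor at $v$ (or $w$), or by switching to a different outside vertex $w'\in V(G)\setminus(N[u]\cup N[v])$ and repeating the construction. In every case an induced $P_5$ is produced, yielding $\nu_s(G)\geq 2$ and the desired contradiction.
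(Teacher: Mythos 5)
Your reduction of the claim to ``$\nu_s(G)\geq 2$'' is correct, and your counting argument for the existence of a $(4,4)$-edge (including the observation that $n_3\leq 4$, since $n_3\geq 6$ would force $3n_3\geq 18>16\geq 4n_4$ edge-endpoints into the degree-$4$ vertices) is sound and matches the paper's final case. The gap is everything after that: the entire substance of the proof is the verification that the extension of the shortest path to an induced $P_5$ can always be completed, and you dispose of it with ``these constraints leave only a short list of problematic configurations, each of which can be resolved.'' That list is never written down, and there is no argument that the proposed repairs (swapping the extending neighbor, or switching to a different outside vertex $w'$) always succeed. Claims~\ref{c9} and~\ref{c10} constrain only degree-$3$ vertices, so when the five candidate vertices are all of degree $4$ --- which is the generic situation here, since $n_3\leq 4$ --- they give you nothing, and the only remaining leverage is a global edge count on a graph with $10$ vertices and up to $20$ edges, where typically every vertex outside $N[u]\cup N[v]$ has two or more neighbors inside it. Carrying this out is precisely the page-long case analysis (organized by $n_3\in\{2,4\}$ and the distances/common neighborhoods of the degree-$3$ vertices) that constitutes the paper's proof; asserting its conclusion is not a proof of it.

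There is also a structural reason to doubt that your specific target is always attainable: an induced $P_5$ is strictly stronger than an induced matching of size $2$. The paper's proof repeatedly produces induced matchings such as $\{u_2v_1,v_2w\}$ whose two edges need not lie on a common induced $P_5$, and the excluded graph $C_5^2$ shows that graphs in this density range can contain no induced $P_5$ at all while near-relatives of it (blown-up $5$-cycles with a few edges perturbed to create degree-$3$ vertices) have very few. You never indicate where the hypotheses that rescue you --- $\delta(G)=3$ together with the independence and triangle-freeness of the degree-$3$ vertices --- actually enter the extension argument, so as written the proposal could not be checked even in principle. To make this approach work you would either have to prove that every graph in the class admits an induced $P_5$ (a nontrivial claim requiring its own case analysis), or retreat to the weaker and more flexible target of an arbitrary induced matching of size $2$, which is what the paper does.
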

\begin{proof}[Proof of Claim \ref{c11}]
We show that $\nu_s(G)\geq 2$ holds for every connected graph $G\not=C_5^2$ of order $10$ with $\delta(G)=3$ such that the set of degree-$3$ vertices form an independent set and every degree-$3$ vertex is not contained in a triangle.

Since the number of degree-$3$ vertices is even, we suppose first that there are two degree-$3$ vertices $u_1,u_2$.
If ${\rm dist}(u_1,u_2)\geq 4$, then $\nu_s(G)\geq 2$ trivially holds.

Note that for every edge $xy$, we may assume that the graph $G-(N[x]\cup N[y])$ is an independent set.
Let $v_1, v_2, v_3$ be the neighbors of $u_1$.

Suppose ${\rm dist}(u_1,u_2)= 3$.
Let $w_1,w_2,w_3$ be the neighbors of $v_1$ beside $u_1$.
We mark $u_1v_1$ and thus $S=N[u_1]\cup N[v_1]$.
Since ${\rm dist}(u_1,u_2)= 3$, we conclude $u_2\notin S$ and hence $N(u_2)=\{w_1,w_2,w_3\}$.
In order that $\{u_2w_i , u_1v_j\}$ for $i\in \{1,2,3\}$ and $j\in \{1,2\}$ is not an induced matching of size two,
we conclude that both $v_2$ and $v_3$  are adjacent to $w_1,w_2,w_3$.
Thus at most six edges leave $S$ but exactly $10$ edges leave $I$ towards $S$, which is a contradiction.

Suppose ${\rm dist}(u_1,u_2)= 2$.
By symmetry, let $v_1$ be a common neighbor of $u_1$ and $u_2$.
We mark $u_1v_1$ and thus $S=N[u_1]\cup N[v_1]$.
Note that $V(G)\setminus S$ is a set of three degree-$4$ vertices in $G$.
Hence, by using that there are $12$ edges leaving $V(G)\setminus S$, there is exactly one edge within $S\setminus \{u_1,v_1\}$.
By symmetry, we assume that $v_2$ has only neighbors in $V(G)\setminus S$; that is, $v_2$ is adjacent to all vertices in $V(G)\setminus S$.
Moreover, $u_2$ has at least one non-neighbor $w$ in $V(G)\setminus S$.
This implies that $\{u_2v_1,v_2w\}$ is an induced matching of size $2$.
This completes the case that $G$ contains exactly two degree-$3$ vertices.

Next, we suppose that $G$ contains exactly four degree-$3$ vertices $u_1,\ldots,u_4$.

Suppose there is a vertex $v_1$ that is adjacent to $u_1,\ldots,u_4$.
We mark $u_1v_1$.
Since $12$ edges leave $V(G)\setminus S$, the graph $G[S]$ is a tree.
Let $w$ be a non-neighbor of $u_2$ in $V(G)\setminus S$ and $v_2$ be a neighbor of $u_1$ beside $v_1$.
Since $w$ has four neighbors $wv_2\in E(G)$.
This implies that $\{u_2v_1,v_2w\}$ is an induced matching of size $2$.

Suppose there is a vertex $v_1$ that is adjacent to exactly three degree-$3$ vertices, say $u_1,u_2,u_3$.
Let $v_2,v_3$ be the neighbors of $u_1$ beside $v_1$.
We mark $u_1v_1$. 
Hence $u_4$ is contained in the independent set $V(G)\setminus S$ and adjacent to $v_2,v_3$ and the degree-$4$ neighbor of $v_1$.
Hence, by using that there are $11$ edges leaving $V(G)\setminus S$, there is exactly one edge within $S\setminus \{u_1,v_1\}$.
By symmetry, we assume that $u_2v_2\notin E(G)$.
This implies that $\{u_2v_1,u_4v_2\}$ is an induced matching of size $2$.

Suppose there is a vertex $v_1$ that is adjacent to exactly two degree-$3$ vertices, say $u_1,u_2$, but no vertex is adjacent to more than two degree-$3$ vertices.
This implies that $u_3,u_4\in V(G)\setminus S$.
Let $v_2,v_3$ be the neighbors of $u_1$ beside $v_1$.
We mark $u_1v_1$. 
Since no degree-$3$ vertex is contained in a triangle and $\{u_1,\ldots,u_4\}$ is an independent set,
$u_2$ is adjacent to $v_2$ or $v_3$; by symmetry, say $v_2$.
Because there are $10$ edges leaving $V(G)\setminus S$,
there are exactly two edges within $S\setminus \{u_1,v_1\}$.
Thus there is a degree-$4$ neighbor $x$ of $v_1$ that has only neighbors in $V(G)\setminus S$.
Furthermore, there is a non-neighbor $w$ of $v_2$ in $V(G)\setminus S$ (note that $w=u_3$ or $w=u_4$ is possible).
This implies that $\{u_1v_2,wx\}$ is an induced matching of size $2$.

Suppose now that all degree-$4$ vertices are adjacent to at most one degree-$3$ vertex.
This implies that there are at least $3$-times as many degree-$4$ vertices as degree-$3$ vertices, which is a contradiction that $G$ has order $10$.
This completes the case that $G$ contains exactly four degree-$3$ vertices.

If $G$ contains at least six degree-$3$ vertices, then at least $18$ edges leave the set of degree-$3$ vertices,
but at most $16$ edges leave the set of degree-$4$ vertices, which is the final contradiction.
\end{proof}

A vertex $v$ is a \textit{cut vertex} of a graph $G$ if $G-v$ has more components than $G$
and a \textit{block} is a maximal $2$-connected subgraph of $G$.

\begin{claim}\label{c12}
There is no cut vertex $v$ such that there is a block $B$ of order $10$ with $v\in V(B)$ such that $B$ contains only one cut vertex of $G$.
\end{claim}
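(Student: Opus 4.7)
Suppose for contradiction that $v$ is a cut vertex of $G$ and that $B$ is a block of order $10$ containing $v$ as the only cut vertex of $G$ inside $B$. My strategy is to exhibit an induced matching $M=\{e_1,e_2\}$ of size $2$ in $B$ whose edges avoid $v$, delete all of $V(B)$ from $G$, and then invoke the minimum-counterexample hypothesis on the remainder.

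I first set up the structure. Because $v$ is the only cut vertex of $G$ lying in $B$, every vertex of $V(B)\setminus\{v\}$ has all its $G$-neighbors inside $B$ and, by Claim \ref{c8}, $G$-degree (hence $B$-degree) at least $3$. Since $v$ is a cut vertex, $v$ has at least one neighbor outside $B$; combined with $\Delta(G)\leq 4$ this yields $d_B(v)\in\{2,3\}$, and in particular $B\neq C_5^2$. Consequently $B-v$ has $9$ vertices, $\delta(B-v)\geq 2$, degree-sum at least $9\cdot 3-d_B(v)\geq 24$, and thus at least $12$ edges. Moreover, Claims \ref{c9} and \ref{c10} descend to $B-v$: the $G$-degree-$3$ vertices in $V(B)\setminus\{v\}$ form an independent set and none of them lies in a triangle in $B$.

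The main obstacle is to establish $\nu_s(B-v)\geq 2$. Note that the theorem applied to $B$ itself (valid since $n(B)=10<n(G)$, $i(B)=0$, $n_5(B)=0$) only gives $\nu_s(B)\geq 10/9$, hence $\nu_s(B)\geq 2$, while applied to $B-v$ it only gives $\nu_s(B-v)\geq 1$, so neither is sufficient and $v$ still has to be avoided. I would instead carry out a case analysis in the spirit of the proof of Claim \ref{c11}, split on $d_B(v)\in\{2,3\}$ and on the number and placement of $G$-degree-$3$ vertices in $V(B)\setminus\{v\}$. In each case I would pick a first edge $e_1$ of $B-v$ minimizing $|N_{B-v}[e_1]|$, observe that $|N_{B-v}[e_1]|\leq 8$ so at least one vertex remains outside, and then use the forbidden configurations from Claims \ref{c9} and \ref{c10} together with the surplus of edges (at least $12$) in $B-v$ to exhibit a second edge $e_2$ forming an induced pair with $e_1$; the tightest subcases are when $d_B(v)=2$ and when $N_B[v]$ induces a $K_4$, where the triangle prohibition at $G$-degree-$3$ vertices forces all vertices in $N_B[v]$ to have $G$-degree $4$ and thereby fixes enough of the structure of $B-v$ to locate $e_2$ directly.

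With $M=\{e_1,e_2\}\subseteq E(B-v)$ in hand, set $S=V(B)$ and $G'=G-S$. Any isolated vertex $w$ of $G'$ would satisfy $N_G(w)\subseteq V(B)$, and since vertices of $V(B)\setminus\{v\}$ have no outside $G$-neighbor this forces $N_G(w)\subseteq\{v\}$, contradicting $d_G(w)\geq 3$ by Claim \ref{c8}; thus $i(G')=0$. Likewise, any $C_5^2$-component of $G'$ would be $4$-regular and leave no room for an edge to $v$, so it would already be a $C_5^2$-component of $G$, contradicting $n_5(G)=0$; hence $n_5(G')=0$. By minimality applied component-wise, $\nu_s(G')\geq n(G')/9=(n(G)-10)/9$. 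Since the endpoints of $e_1$ and $e_2$ lie in $V(B)\setminus\{v\}$ and have no $G$-neighbor outside $V(B)$, $M$ together with a maximum induced matching of $G'$ is an induced matching of $G$ of size at least $2+(n(G)-10)/9=(n(G)+8)/9>n(G)/9$, contradicting the counterexample hypothesis.
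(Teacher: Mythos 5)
Your outer framework (take two independent edges inside $B-v$, delete all of $V(B)$, check $i(G')=n_5(G')=0$, and pay for the ten removed vertices with the two marked edges since $10\leq 18$) is arithmetically sound, but the proof has a genuine gap at exactly the point you yourself call ``the main obstacle'': you never actually establish that $B$ contains an induced matching of size $2$ avoiding $v$. The passage beginning ``I would instead carry out a case analysis\dots'' is a plan, not an argument; no case is worked out, and the assertion that the surplus of edges in $B-v$ together with Claims \ref{c9} and \ref{c10} always yields a second edge $e_2$ forming an induced pair with $e_1$ is left unverified. Claim \ref{c11} cannot be quoted in its place either, both because the matching it produces may use $v$ and because $B$ need not satisfy its hypotheses (the vertex $v$ has $B$-degree $2$ or $3$). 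So the combinatorial core of your argument is missing, and it is not obviously cheap to supply.

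The paper's own proof shows that none of this is needed: it marks a \emph{single} edge $f=xy$ of $B$ at distance at least $2$ from $v$. Then $S=N[x]\cup N[y]\subseteq V(B)\setminus\{v\}$ has at most $8$ vertices, and every isolated vertex of $G-S$ lies in $V(B)\setminus(S\cup\{v\})$, because vertices outside $B$ meet $V(B)$ only in $v$ and $v$ itself retains its neighbor outside $B$; hence $|S|+i(G')\leq |S|+(9-|S|)=9$, which is exactly the budget for one marked edge. If no such edge $f$ exists, every edge of $B$ meets $N[v]$, so each vertex at distance $2$ from $v$ has all of its at least $3$ neighbors among the at most $3$ vertices of $N_B(v)$, forcing at most $3$ such vertices and $n(B)\leq 7<10$, a contradiction. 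You should either switch to this one-edge argument or actually carry out the case analysis you postpone; as written, the claim is not proved.
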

\begin{proof}[Proof of Claim \ref{c12}]
For a contradiction, we assume that such a configuration exists.
Suppose there is an edge $f$ in $B$ at distance at least $2$ from $v$.
We mark $f$ and thus $|S|+i(G')\leq 9$, which is a contradiction.
Thus all edges in $B$ are at distance at most $1$ from $v$.
Note that
there are at most three vertices at distance $1$ from $v$ and hence at most three vertices at distance $2$ from $v$, which is a contradiction.
\end{proof}

Note the following: suppose we mark an edge incident to a degree-$3$ and degree-$4$ vertex.
Thus $|S|=7$.
If $i(G')=3$, then Claim \ref{c11} and \ref{c12} imply that $G'$ has non-trivial components and at least two edges join $S$ and these non-trivial components.
This simplifies the proofs of the next claims significantly.

\begin{claim}\label{c13}
No degree-$4$ vertex is adjacent to four degree-$3$ vertices.
\end{claim}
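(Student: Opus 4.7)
The plan is to follow the pattern of the earlier claims. Assume for contradiction that a degree-$4$ vertex $v$ has four degree-$3$ neighbors $u_1, u_2, u_3, u_4$. By Claim~\ref{c9} these are pairwise non-adjacent, and by Claims~\ref{c8} and~\ref{c9} the two remaining neighbors $a_1, a_2$ of $u_1$ are both of degree $4$. I would mark the edge $u_1 v$, so that $S = N[u_1] \cup N[v] = \{u_1, v, u_2, u_3, u_4, a_1, a_2\}$ has size $7$. Let $\alpha$ be the number of edges between $\{u_2, u_3, u_4\}$ and $\{a_1, a_2\}$, and let $\beta = 1$ if $a_1 a_2 \in E(G)$ and $\beta = 0$ otherwise. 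A degree-sum calculation shows that exactly $12 - 2\alpha - 2\beta$ edges join $S$ to $V(G) \setminus S$. Since $\delta(G) \geq 3$ by Claim~\ref{c8}, each vertex of $I'$ absorbs at least three such edges, so $i(G') \leq 4$; the subcase $i(G') \leq 2$ immediately gives $|S| + i(G') \leq 9$, the required contradiction.

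In the subcase $i(G') = 4$, the count forces $\alpha = \beta = 0$, $V(G) = S \cup I'$ (hence $n(G) = 11$), and every $y \in I'$ to be a degree-$3$ vertex whose three neighbors lie in $\{u_2, u_3, u_4, a_1, a_2\}$. Since $|N(a_1) \cap I'| = 3 > 2 \geq |N(u_2) \cap I'|$, there is $y \in (N(a_1) \cap I') \setminus N(u_2)$. The potentially joining edges $v a_1$, $v y$, $u_2 a_1$, $u_2 y$ are all absent (because $a_1, y \notin N(v)$, $\alpha = 0$, and $y \notin N(u_2)$), so $\{v u_2, a_1 y\}$ is an induced matching, and $\nu_s(G) \geq 2 > n(G)/9$, a contradiction.

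The main obstacle is the subcase $i(G') = 3$, where the bare induction only yields $\nu_s(G) \geq 1 + (n(G) - 10)/9 = (n(G) - 1)/9$, short of $n(G)/9$ by $1/9$. Here I would invoke the remark following Claim~\ref{c12}: since $|S| + i(G') = 10$, Claim~\ref{c11} rules out $V(G) = S \cup I'$, so $G'$ has a non-trivial component, and Claim~\ref{c12} forces at least two edges from $S$ to non-trivial components of $G'$. Combined with the $\geq 9$ edges from $S$ to $I'$, this implies $\alpha = \beta = 0$. Writing $p_y = |N(y) \cap \{u_2, u_3, u_4\}|$ and $q_y = |N(y) \cap \{a_1, a_2\}|$ for $y \in I'$, the bound $\sum_y p_y \leq 6$ gives $\sum_y q_y \geq \sum_y d(y) - 6 \geq 3$, so some $y \in I'$ has $q_y \geq 1$ and $p_y \leq 2$; then choosing $a_i \in N(y) \cap \{a_1, a_2\}$ and $j \in \{2, 3, 4\}$ with $u_j \notin N(y)$ makes $\{v u_j, a_i y\}$ an induced matching of size two by the same edge check as above. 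Marking this pair in place of $u_1 v$, a case analysis on $S_2 = N[v u_j] \cup N[a_i y]$ --- exploiting the two guaranteed edges from $S$ to the non-trivial component to limit the newly created isolated vertices --- yields $|S_2| + i(G - S_2) \leq 18 = 9 \cdot 2$, so applying the induction hypothesis to $G - S_2$ delivers $\nu_s(G) \geq n(G)/9$, the final contradiction.
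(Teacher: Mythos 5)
There is a genuine gap, and it sits exactly where the whole burden of the claim lies. Your framework (mark $u_1v$, $|S|=7$, bound $i(G')$ by degree counting) is the paper's, and your subcases $i(G')\leq 2$ and $i(G')=4$ are correct, if laborious. But in the subcase $i(G')=3$ you end with the assertion that ``a case analysis on $S_2=N[vu_j]\cup N[a_iy]$ \dots yields $|S_2|+i(G-S_2)\leq 18$.'' That case analysis is never carried out, and it is not routine: $S_2$ can have up to $12$ vertices, so you would need $i(G-S_2)\leq 6$, and controlling the isolated vertices created by deleting $S_2$ is an argument of the same order of difficulty as the other claims in the paper. The two guaranteed edges into the nontrivial component of $G'$ do not obviously suffice for this. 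As written, the hardest subcase is an unproven assertion.

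The idea you are missing is to combine Claim \ref{c9} with the structure of $S$ to pin down the degrees in $I'$. A vertex $y\in I'$ has all its neighbors in $S$, and it cannot be adjacent to $v$ or $u_1$ (their closed neighborhoods lie in $S$), so $N(y)\subseteq\{u_2,u_3,u_4,a_1,a_2\}$. If $y$ had degree $3$, then by Claim \ref{c9} all three of its neighbors would have to be degree-$4$ vertices of $S$, but only $a_1$ and $a_2$ are available --- impossible. Hence every vertex of $I'$ has degree $4$. Since at most $12$ edges leave $S$, this gives $i(G')\leq 3$ at once, and $i(G')=3$ forces all $12$ edges to end in $I'$, so no edge reaches $V(G)\setminus(S\cup I')$, the connected graph $G$ has order $10$ with $\delta(G)\geq 3$, and Claim \ref{c11} finishes the proof. (Alternatively, with the remark after Claim \ref{c12}, at most $10$ edges can reach $I'$ when $i(G')=3$, while $3\cdot 4=12$ are required --- an immediate contradiction.) This observation makes your second-marking construction, and the entire $i(G')=4$ analysis, unnecessary.
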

\begin{proof}[Proof of Claim \ref{c13}]
For a contradiction, we assume that there is a vertex $v$ that is adjacent to four degree-$3$ vertices $u_1,\ldots, u_4$.
We mark $u_1v$; that is, $|S|=7$ and at most $12$ edges join $S$ and $I'$.
Every vertex in $I'$ has degree $4$ because there are only two degree-$4$ vertices which might be adjacent to a vertex in $I'$ (Claim \ref{c9}).
Thus $i(G')=3$ and $G$ is a graph of order $10$, which is a contradiction to Claim \ref{c11}.
\end{proof}

\begin{claim}\label{c14}
No degree-$4$ vertex is adjacent to three degree-$3$ vertices.
\end{claim}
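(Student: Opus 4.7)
The plan is to follow the template set by Claims~\ref{c13} and~\ref{c7} and derive a contradiction through a careful edge count augmented, if necessary, by a second-marking argument. Assume for contradiction that a degree-$4$ vertex $v$ has three degree-$3$ neighbors $u_1,u_2,u_3$; by Claim~\ref{c13} the fourth neighbor $u_4$ of $v$ has degree $4$, and by Claim~\ref{c9} the two neighbors $a,b$ of $u_1$ distinct from $v$ also have degree $4$. Mark $u_1v$, so that $S=\{u_1,v,a,b,u_2,u_3,u_4\}$ has $|S|=7$. The degree sum over $S$ is $3+4+4+4+3+3+4=25$ and the six edges through $u_1$ or $v$ (namely $u_1v,u_1a,u_1b,vu_2,vu_3,vu_4$) lie inside $G[S]$, so at most $13$ edges leave $S$; the goal is to establish $i(G')\le 2$, from which $|S|+i(G')\le 9$ supplies the contradiction.

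The key structural observation is that every degree-$3$ vertex $x\in I'$ satisfies $N(x)=\{a,b,u_4\}$: its three neighbors lie in $S$ and are all degree-$4$ by Claim~\ref{c9}, while the only degree-$4$ vertices of $S$ are $v,a,b,u_4$ and $v$ is already saturated by $\{u_1,u_2,u_3,u_4\}\subseteq S$. Since $a$ already has $u_1$ as a degree-$3$ neighbor, Claim~\ref{c13} bounds the number of degree-$3$ vertices in $I'$ by two. A direct degree count then rules out $i(G')\ge 4$, and when $i(G')=3$ the remark following Claim~\ref{c12} (which forces at least two edges from $S$ to non-trivial components of $G'$) excludes the sub-case in which all three isolated vertices have degree $4$, since that would demand $12+2>13$ edges leaving $S$.

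The remaining difficult sub-cases are $i(G')=3$ with one or two degree-$3$ isolated vertices in $I'$. In each of these the edge count is tight and pins the internal structure of $G$: $G[S]$ contains exactly the six forced edges, each degree-$3$ member of $I'$ has neighborhood $\{a,b,u_4\}$, and each degree-$4$ member of $I'$ draws its four neighbors from $\{a,b,u_2,u_3,u_4\}$. Here the plan is to exploit the fact that $N_G(u_1)\cup N_G(v)\subseteq S$, so that \emph{every} edge $f$ of $G$ with both endpoints in $V(G)\setminus S$ automatically yields an induced matching $\{u_1v,f\}$ of $G$; such an $f$ exists inside a non-trivial component of $G'$ by the remark after Claim~\ref{c12}. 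Treating $f$ as a second marked edge and replaying the counting argument as in Claim~\ref{c3} with the enlarged set $S\cup N[f]$ is then expected to deliver the contradiction $|S\cup N[f]|+i(G-(S\cup N[f]))\le 18$. The main obstacle is the sub-case with two degree-$3$ isolated vertices, in which $a,b,u_4$ each absorb three degree-$3$ neighbors and the non-trivial component supplying $f$ is highly constrained; a careful choice of $f$, together with the rigidity of the structure already forced by the tight edge count, will be required to verify the $\le 18$ bound and close out the proof.
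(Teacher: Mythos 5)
Your reductions up to the point where $i(G')=3$ and $I'$ contains at least one degree-$3$ vertex agree with the paper, but the two sub-cases you yourself identify as the remaining work are exactly where the paper's proof has its content, and your proposal does not close them. First, you never make the extremal choice that the paper builds in from the start (``if possible choose $v$ and $u_1$ such that $u_1v$ is contained in a $C_4$''). That choice is what kills the sub-case with two degree-$3$ vertices in $I'$: there the tight count forces $G[S]$ to be a tree (so $u_1v$ lies in no $C_4$), while both degree-$3$ vertices of $I'$ are adjacent to all of $a,b,u_4$, so $a$ is a degree-$4$ vertex with three degree-$3$ neighbors one of whose incident edges \emph{is} in a $C_4$ --- contradicting the choice. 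Without that device you have no replacement argument for this configuration.

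Second, your fallback of marking a second edge $f$ inside a non-trivial component of $G'$ and aiming for $|S\cup N[f]|+i(G-(S\cup N[f]))\le 18$ has essentially no slack and is not verified. The three vertices of $I'$ have all their neighbours in $S$, hence remain isolated after deleting $S\cup N[f]$, so $i\ge 3$ already; and generically $|N[f]\setminus S|=8$, giving $|S\cup N[f]|=15$ and a total of exactly $18$ before counting a single newly isolated vertex of $V(G)\setminus(S\cup I')$. You would have to prove that some choice of $f$ creates \emph{no} new isolated vertex (or forces overlap of $N[f]$ with $S$), and you give no mechanism for this. The paper instead handles the last sub-case ($I'$ = two degree-$4$ vertices plus one degree-$3$ vertex, exactly two edges from $S$ to the rest) by \emph{re-marking a single edge}: it finds $x\in S\setminus\{u_1,v\}$ with no neighbour outside $S\cup I'$ and $y$ on the other side of the star with such a neighbour, and marks $x$ together with its neighbour in $\{u_1,v\}$; the new deleted set then lies inside $S\cup I'$ and the survivor $y$ caps the isolated-vertex count, yielding $|S_{\mathrm{new}}|+i\le 9$. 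So the statement is true and your skeleton is compatible with the paper's, but as written the proof is incomplete in its two decisive sub-cases, and the route you sketch for them is unlikely to succeed without the extremal-choice and re-marking ideas.
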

\begin{proof}[Proof of Claim \ref{c14}]
For a contradiction, we assume that there is a vertex $v$ that is adjacent to three degree-$3$ vertices $u_1,u_2,u_3$.
If possible choose $v$ and $u_1$ such that $u_1v$ is contained in a $C_4$.
Let $v_1,v_2$ be the neighbors of $u_1$ beside $v$.
We mark $u_1v$; that is, $|S|=7$ and at most $13$ edges join $S$ and $I'$.
If $i(G')\geq 4$, then two degree-$3$ vertices share a common edge which contradicts Claim \ref{c9}.
For contradiction, we assume that $i(G')=3$.
By Claim \ref{c11} and \ref{c12}, at least two edges join $S$ and $V(G)\setminus (S \cup I')$.
Hence at most $11$ edges join $S$ and $I'$.
Thus there is at least one degree-$3$ vertex $w\in I'$ that is adjacent to the three degree-$4$ vertices in $S$.
If there are three degree-$3$ vertices in $I'$, then all are adjacent to $v_1$ which is a contradiction to Claim \ref{c13}.
If there are two degree-$3$ vertices in $I'$, then $S$ induces a tree and both degree-$3$ vertices in $I'$ are adjacent to both $v_1$ and $v_2$.
Thus $v_1$ is a vertex adjacent to three degree-$3$ vertices and $v_1w$ is contained in a $C_4$, which is a contradiction to our choice of $u_1$ and $v$.

Therefore,
$I'$ contains two degree-$4$ vertices and the degree-$3$ vertex $w$, and exactly two edges join $S$ and $V(G)\setminus(S \cup I')$.
Thus there are two vertices $x,y\notin \{u_1,v\}$ such that one is a neighbor of $u_1$, one is a neighbor of $v$, and
$x$ has no neighbor in $V(G)\setminus(S \cup I')$ but $y$ does.   
Marking $x$ with its neighbor in $\{u_1,v\}$ instead of $u_1v$ leads to a contradiction and this completes the proof of the claim.
\end{proof}

\begin{claim}\label{c15}
No degree-$4$ vertex is adjacent to two degree-$3$ vertices.
\end{claim}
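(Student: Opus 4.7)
Following the pattern established by Claims~\ref{c13} and~\ref{c14}, the plan is to mark the edge $u_1v$, where $v$ is a degree-$4$ vertex adjacent to the two degree-$3$ vertices $u_1,u_2$. Writing $N(u_1)=\{v,v_1,v_2\}$ and $N(v)=\{u_1,u_2,v_3,v_4\}$, Claim~\ref{c10} forbids a triangle through $u_1$, so these two neighborhoods meet only in $\{u_1,v\}$ and $|S|=7$. By Claim~\ref{c9}, each of $v,v_1,v_2,v_3,v_4$ has degree $4$, so the degree sum over $S$ equals $26$, and at least $6$ edges (those incident to $u_1$ or $v$) lie inside $S$, leaving at most $14$ edges from $S$ to $V(G)\setminus S$. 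To obtain the contradiction it suffices to prove $i(G')\le 2$.

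Assume for contradiction $i(G')\ge 3$. The remark preceding Claim~\ref{c13}, together with Claims~\ref{c11} and~\ref{c12}, guarantees that $G'$ has a non-trivial component connected to $S$ by at least two edges; consequently at most $12$ edges enter $I'$. Since each $I'$-vertex has degree at least $3$, we get $i(G')\le 4$, and $i(G')=4$ would force each $v_i$ to have three degree-$3$ neighbors in $I'$, contradicting Claim~\ref{c14}. Hence $i(G')=3$. Three further structural facts now heavily restrict the adjacencies from $I'$ to $S$: by Claim~\ref{c9}, every degree-$3$ vertex of $I'$ has all its neighbors among $\{v_1,v_2,v_3,v_4\}$; by Claim~\ref{c10}, $u_2$ is not adjacent to $v_3$ or $v_4$, so $v_3$ and $v_4$ have no degree-$3$ neighbor in $S$; and by Claim~\ref{c14}, $v_1$ and $v_2$ (already adjacent to $u_1$) have at most one further degree-$3$ neighbor each, while $v_3,v_4$ have at most two each.

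The remainder is a case analysis on the degree multiset of $I'$. When all three isolated vertices of $G'$ have degree $4$, the counting is tight: $G[S]$ is a tree with exactly six edges, and I would construct an induced matching of size two of the form $\{u_2v,\,v_iw\}$ for a suitable $v_i\in\{v_3,v_4\}$ and a non-neighbor $w\in I'$ of $v_i$, in the spirit of Claim~\ref{c11}; existence of such a $w$ should follow because the twelve $S$-to-$I'$ edges cannot all be incident to $v_i$ without overloading the degrees allowed by Claim~\ref{c14}. In the mixed cases with at least one degree-$3$ vertex in $I'$, I plan to re-mark an edge of the form $u_1v_i$, $u_2v_j$, or $xy$ (where $y\in\{u_1,v\}$ and $x$ is a vertex of $S$ all of whose neighbors lie in $S\cup I'$) in place of $u_1v$, exactly as at the end of Claim~\ref{c14}; the forest-like structure of $G[S]$ combined with Claim~\ref{c14} should force a strictly smaller $|S'|+i((G-S')')$, yielding the desired contradiction.

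The principal obstacle I foresee is precisely this re-marking step: one must verify that, despite the flexibility in how the twelve $S$-to-$I'$ edges distribute among $v_1,v_2,v_3,v_4$, some edge $x$ of the form above is always present, and that re-marking around it actually cuts at least one vertex out of the count. I expect Claim~\ref{c14} to be decisive here, as it prevents any single $v_i$ from absorbing all the $I'$-edges and hence guarantees an $x$ with the required property.
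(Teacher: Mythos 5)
Your setup and the counting down to ``$i(G')=3$ and $I'$ contains at most two degree-$3$ vertices'' coincide with the paper's and are correct, but the proof has a genuine gap at its crux. The decisive idea you are missing is the extremal choice of the marked edge: the paper chooses the pair $(v,u_1)$ so that, if at all possible, $u_1v$ lies in a $4$-cycle, and among those in a $4$-cycle containing two degree-$3$ vertices. Without this choice the subcase in which $I'$ contains \emph{two} degree-$3$ vertices $w_1,w_2$ cannot be closed: since their six neighbor-slots must saturate the capacities $1,1,2,2$ of $v_1,v_2,v_3,v_4$, one gets $N(w_i)=\{v_i,v_3,v_4\}$ (in your labelling), so $v_3w_1v_4w_2v_3$ is a $4$-cycle through two degree-$3$ vertices and $v_3$ is itself a degree-$4$ vertex with two degree-$3$ neighbors --- the configuration reproduces itself, and re-marking ``exactly as at the end of Claim~\ref{c14}'' merely lands you in an isomorphic situation. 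The paper's extremal choice turns precisely this self-reproduction into a contradiction: it forces $u_1v$ to lie in a $4$-cycle with two degree-$3$ vertices, hence forces $u_2$ adjacent to $v_1$ or $v_2$, contradicting Claim~\ref{c14}. You neither make this choice nor supply a substitute argument, and you lump this subcase into the ``mixed cases'' that you explicitly leave unverified.

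The one concrete step you do propose, for the case where all of $I'$ has degree $4$, also fails: $\{u_2v,\,v_iw\}$ with $v_i\in\{v_3,v_4\}\subseteq N(v)$ is not an induced matching, because the edge $vv_i$ joins its two members. Independently of that, exhibiting an induced matching of size two only yields a contradiction when $n(G)\le 18$, which you have not established --- here $G'$ has nontrivial components, so unlike in Claim~\ref{c11} one cannot conclude from $\nu_s(G)\ge 2$ alone; one must either bound $|S''|+i(G-S'')$ for a two-edge marking or, as the paper does, re-mark a \emph{single} edge $zw$ where $z$ is a vertex of $S\setminus\{u_1,v\}$ all of whose neighbors lie in $S\cup I'$ and $w\in I'$ is a suitably chosen non-neighbor (of $u_2$, respectively of a neighbor of $u_1$), and then verify directly that at most one vertex outside $S\cup I'$ becomes isolated. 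You defer exactly this verification as ``the principal obstacle,'' so the argument is incomplete where the real work lies.
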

\begin{proof}[Proof of Claim \ref{c15}]
For a contradiction, we assume that there is a vertex $v$ that is adjacent to two degree-$3$ vertices $u_1,u_2$.
If possible choose $v$ and $u_1$ such that $u_1v$ is contained in a $4$-cycle $C$ and if possible choose $C$ to contain two degree-$3$ vertices.
Let $v_1,v_2$ be the neighbors of $u_1$ beside $v$.
We mark $u_1v$; that is, $|S|=7$ and at most $14$ edges join $S$ and $I'$.
For a contradiction, we assume $i(G')\geq 3$.
Let $x_1,x_2$ be the degree-$4$ neighbors of $v$.
Note that $v_1,v_2$ and $x_1,x_2$ are adjacent to at most one degree-$3$ vertex and to at most two degree-$3$ vertices in $I'$, respectively;
that is, $I'$ contains at most two degree-$3$ vertices.
If $I'$ contains two degree-$3$ vertices $w_1,w_2$,
then, by symmetry of $v_1, v_2$, we obtain $N(w_i)=\{v_i,x_1,x_2\}$.
Thus $x_1$ is a degree-$4$ vertex adjacent to two degree-$3$ vertices and $x_1w_1x_2w_2x_1$ is a $4$-cycle containing two degree-$3$ vertices.
Our choice of $v$ implies that there is an edge joining $u_2$ and $v_1$ or $v_2$, which is a contradiction to Claim \ref{c14}.

Thus we assume that $I'$ contains at most one degree-$3$ vertex.
A degree counting argument implies that $i(G')=3$ and hence at least $11$ edges join $S$ and $I'$.
Claim \ref{c11} and \ref{c12} imply that there are at most $12$ edges joining $S$ and $I'$.
It follows that $S$ induces a tree.

Suppose $v_1$ has no neighbor in $V(G)\setminus (S\cup I')$.
Thus $N(v_1)=I'\cup \{u_1\}$.
Let $w$ be a non-neighbor of $u_2$ in $I'$.
Marking $v_1w$ instead of $u_1v$ leads to a contradiction because the edges $v_1w$ and $u_1v$ are independent
and at most one vertex in $G- (S\cup I')$ becomes an isolated vertex.

Thus, by symmetry in $\{v_1,v_2\}$, both $v_1$ and $v_2$ have a neighbor in $V(G)\setminus (S\cup I')$.
By symmetry in $\{x_1,x_2\}$, we may assume that $x_1$ has no neighbor in $V(G)\setminus (S\cup I')$.
Hence $N(x_1)=I'\cup \{v\}$.
Let $w'$ be a non-neighbor of $v_1$ in $I'$.
Similar as above, marking $x_1w'$ instead of $u_1v$ leads to a contradiction.
\end{proof}

\begin{claim}\label{c16}
$\delta(G)\geq 4$.
\end{claim}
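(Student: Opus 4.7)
The plan is to suppose for contradiction that $\delta(G)=3$, fix a degree-$3$ vertex $u$, and mark the edge $uv_1$ for some neighbor $v_1$ of $u$. By Claims~\ref{c9} and~\ref{c15} the neighbors $v_1,v_2,v_3$ of $u$ all have degree $4$ and each uses $u$ as its only degree-$3$ neighbor; in particular the three remaining neighbors $w_1,w_2,w_3$ of $v_1$, and the remaining neighbors of $v_2$ and $v_3$, all have degree $4$ as well. Claim~\ref{c10} forbids a triangle through $uv_1$, so $S=N[u]\cup N[v_1]=\{u,v_1,v_2,v_3,w_1,w_2,w_3\}$ satisfies $|S|=7$. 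The degree sum over $S$ is $3+6\cdot 4=27$ and $G[S]$ contains the six edges $uv_i$ and $v_1w_j$, so at most $15$ edges leave $S$.

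The central step is to show $i(G')\leq 2$. A degree-$3$ vertex of $I'$ cannot be adjacent to any of $v_1,v_2,v_3$, since their one permitted degree-$3$ neighbor is already $u$ by Claim~\ref{c15}; hence its three neighbors are forced to be $w_1,w_2,w_3$, and applying Claim~\ref{c15} to any single $w_i$ bounds the number of degree-$3$ vertices in $I'$ by one. Writing $a\leq 1$ and $b$ for the number of degree-$3$ and degree-$4$ vertices in $I'$ respectively, the number of $S$-$I'$ edges equals $3a+4b\geq 4\,i(G')-1$. Combined with the at-least-two edges from $S$ to the non-trivial components of $G'$ provided, whenever $|S|+i(G')=10$, by the remark preceding Claim~\ref{c13} together with Claims~\ref{c11} and~\ref{c12}, the $15$-edge budget forces $i(G')\leq 3$ and leaves only a short list of tight configurations: one with $i(G')=4$ (necessarily $a=1,\,b=3,\,e(G[S])=6$, with no edge from $S$ leaving $S\cup I'$ and $n(G)=11$), and two with $i(G')=3$ corresponding to $(a,b)\in\{(0,3),(1,2)\}$ with tightly prescribed $e(G[S])$ and number of edges from $S$ to non-trivial components.

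The principal obstacle is the elimination of these tight configurations. In the $i(G')=4$ case the whole graph $G$ is essentially determined, and one verifies directly that $\nu_s(G)\geq 2>n(G)/9$; for instance, letting $y$ be the unique degree-$3$ vertex of $I'$, the edges $uv_2$ and $yw_1$ form an induced matching of size two, contradicting the choice of $G$ as a counterexample. In each $i(G')=3$ case a re-marking argument in the style of Claim~\ref{c15} applies: one locates a vertex $x\in\{v_2,v_3,w_1,w_2,w_3\}$ whose non-$S$ neighbors lie entirely in $I'$ together with a partner $z\in I'$ so that $xz$ is independent from a suitable edge of $G$ supported on the remainder of $S$, and marks $xz$ instead of $uv_1$; the new deletion set then satisfies $|S|+i(G')\leq 9$, which contradicts the minimality of $G$. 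Once all the tight configurations are ruled out, one concludes $|S|+i(G')\leq 9$ in every case and the claim follows.
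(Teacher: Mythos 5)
Your setup and the first counting steps track the paper closely: marking an edge at a degree-$3$ vertex, using Claims~\ref{c9}, \ref{c15} and \ref{c10} to get $|S|=7$ with degree sum $27$ and at most $15$ edges leaving $S$, forcing any degree-$3$ vertex of $I'$ to have neighborhood exactly $\{w_1,w_2,w_3\}$ and hence (via Claim~\ref{c15}) allowing at most one such vertex, and dispatching the rigid $i(G')=4$, $n(G)=11$ configuration by exhibiting an induced matching of size $2$ -- all of that is sound. But you omit the paper's crucial extremal choice: the marked edge is chosen, among all edges joining a degree-$3$ to a degree-$4$ vertex, to lie in as many $4$-cycles as possible. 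This choice does real work. If $I'$ contains a degree-$3$ vertex $y$ with $N(y)=\{w_1,w_2,w_3\}$, then $yw_1$ lies in at least two $4$-cycles (through $v_1$), so by maximality $uv_1$ does too, forcing at least two extra edges inside $S'$ and hence at most $11$ edges out of $S$; combined with the two edges reserved for non-trivial components this kills the $(a,b)=(1,2)$ configuration outright. Without that choice you are left having to handle $(a,b)=(1,2)$ with $m_{S'}\in\{0,1\}$ by hand, a case the paper never needs to touch, and your one-sentence re-marking recipe does not address it.

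The deeper problem is that your "principal obstacle" paragraph is a plan, not an argument. Re-marking an edge $xz$ with $x\in S$ and $z\in I'$ only yields $|S_{\mathrm{new}}|+i\leq 9$ after verifying (i) that $N[x]\cup N[z]\subseteq S\cup I'$, so $|S_{\mathrm{new}}|\leq 8$ and the leftover vertices of $S\cup I'$ retain neighbors, and (ii) that no vertex of $V(G)\setminus(S\cup I')$ becomes isolated. Point (ii) is not automatic: when exactly three edges join $S$ to the non-trivial components, they could all land on a single degree-$3$ vertex $z$ outside $S\cup I'$, which would then become isolated after the re-marking. The paper excludes precisely this via the $4$-cycle-maximal choice again ($z$ would lie in a $4$-cycle while $uv$ lies in none, since $S$ induces a tree). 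You assert the conclusion "$|S|+i(G')\leq 9$" for the new marking without locating the pair $(x,z)$ in each tight configuration or checking either (i) or (ii), so the claim is not yet proved as written. Reinstating the extremal choice of the marked edge, and then carrying out the two explicit re-marking sub-cases of the paper (depending on whether $v_1$ or $v_2$ has a neighbor outside $S\cup I'$), is the missing content.
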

\begin{proof}[Proof of Claim \ref{c16}]
For a contradiction, we assume that there is a vertex $v$ which is adjacent to a degree-$3$ vertices $u$.
Choose $u,v$ such that $uv$ is contained in as many $4$-cycles as possible.
Let $v_1,v_2$ be the neighbors of $u$ beside $v$ and $x_1,x_2,x_3$ be the neighbors of $v$ beside $u$.
We mark $uv$; that is, $|S|=7$ and at most $15$ edges join $S$ and $I'$.

If $I'$ does not contain a degree-$3$ vertex, then $i(G')\leq 3$.
Suppose $I'$ contains a degree-$3$ vertex $w$. 
By Claim \ref{c15}, we conclude $N(w)=\{x_1,x_2,x_3\}$
and thus, by our choice of $uv$,
the set $S\setminus\{u,v\}$ induces a graph of size at least $2$,
which in turn implies that at most $11$ edges join $S$ and $I'$.
Hence $i(G')\leq 3$.



Therefore,
we may assume that $i(G')=3$ and Claim \ref{c11} and \ref{c12} imply that there are at most $13$ edges joining $S$ and $I'$.
If $I'$ contains a degree-$3$ vertex, then with the same argumentation as above there are at least two edges within $S\setminus \{u,v\}$ but then at most nine edges join $S$ and $I'$,
which is a contradiction to the fact there is at most one degree-$3$ vertex in $I'$.

Thus we may assume that $I'$ contains three degree-$4$ vertices.
A degree sum argument implies that $S$ induces a tree and exactly three edges join $S$ and $V(G)\setminus(S \cup I')$.
If this three edges are incident with a common vertex $z\notin S$ and $z$ has degree~$3$, then $z$ is contained in a $4$-cycle and this contradicts the choice of $u$ and $v$ because $S$ induces a tree.
Thus deleting vertices in $S$ does not lead to isolated vertices in $G-(S \cup I')$.

Suppose $v_1$ or $v_2$, by symmetry say $v_1$, has at least one neighbor in $V(G)\setminus(S \cup I')$.
Let $w\in I'$ be a non-neighbor of $v_1$.
By symmetry in $\{x_1,x_2,x_3\}$, we conclude that $x_1$ has no neighbor in $V(G)\setminus(S \cup I')$ and hence marking $x_1w$ instead of $uv$ leads to a contradiction.

Therefore, we may assume that $N(v_i)=\{u\}\cup I'$ for $i\in \{1,2\}$.
By symmetry, $x_1$ has a neighbor in $V(G)\setminus(S \cup I')$ and a non-neighbor $w\in I'$.
Marking $v_1w$ instead of $uv$ leads to a contradiction, which completes the proof of the claim.
\end{proof}

\begin{claim}\label{c17}
$G$ triangle-free.
\end{claim}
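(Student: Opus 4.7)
The plan is to mark the edge $xy$ of the assumed triangle $xyz$ and to bound $|S|+i(G')$, where $S=N[x]\cup N[y]$. By Claim~\ref{c16}, $G$ is $4$-regular, so $x,y,z$ each have degree $4$; since $z\in N(x)\cap N(y)$ we get $|S|\leq 7$, and since $x,y$ are already saturated by their neighbors in $S$, every $w\in I'$ satisfies $N_G(w)\subseteq S\setminus\{x,y\}$.

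The easy subcases $|S|\leq 6$ go through quickly: if $|S|\leq 5$ then $|S\setminus\{x,y\}|\leq 3<4=d_G(w)$ forces $I'=\emptyset$, and if $|S|=6$ a short count on the four candidate neighbors available for any $w\in I'$ gives $i(G')\leq 3$; either way $|S|+i(G')\leq 9$. The main subcase is $|S|=7$, in which we may write $N(x)=\{y,z,a,b\}$, $N(y)=\{x,z,c,d\}$, and $N(z)=\{x,y,e,f\}$ with $a,b,c,d$ pairwise distinct. Every $w\in I'$ has its four neighbors inside $\{z,a,b,c,d\}$, so $w$ is of one of two types: Type~$1$, where $w\in\{e,f\}$ with $w\sim z$ and $|N(w)\cap\{a,b,c,d\}|=3$; or Type~$2$, where $w\notin\{e,f\}$ and $N_G(w)=\{a,b,c,d\}$. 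Writing $t_1,t_2$ for the two counts, we have $t_1\leq 2$, and a double count on the $12$ free incidences of $\{a,b,c,d\}$ (each such vertex has degree $4$ with exactly one edge to $\{x,y\}$) yields $3t_1+4t_2\leq 12$, so $i(G')=t_1+t_2\leq 3$.

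The hard part will be excluding $i(G')=3$. In that situation $|S|+i(G')=10$ and hence $n(G)\geq 10$. If $n(G)=10$ then $V(G)=S\cup I'$, and in each of the three feasible patterns $(t_1,t_2)\in\{(2,1),(1,2),(0,3)\}$ the degree constraints force a very rigid edge set; a direct inspection of each pattern exhibits an induced matching of size $2$ in the resulting $4$-regular $10$-vertex graph, contradicting the choice of $G$. If $n(G)\geq 11$ we let $R:=V(G)\setminus(S\cup I')$, which is non-empty; since $x,y$ have no edges leaving $S$ and $z$'s only outside neighbors lie in $\{e,f\}\subseteq S\cup I'$, all edges from $S$ to $R$ come from $\{a,b,c,d\}$ and number at most $12-(3t_1+4t_2)\leq 2$. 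Combined with the $4$-regularity of every $r\in R$ and Claim~\ref{c12} (which rules out $R$ forming a small block attached to $S$), this either forces a degree-sum impossibility in $G[R]$, or a re-marking step -- replacing $xy$ by an edge incident to a vertex of $R$, or to a non-isolated vertex of $\{e,f\}$ -- yields $|S'|+i(G'')<10$ and contradicts minimality.

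Finally, the subcase $\{e,f\}\cap\{a,b,c,d\}\neq\emptyset$ will be handled analogously: every overlap contributes an additional edge inside $S$, further reducing the free-slot count on $\{a,b,c,d\}$ and tightening the bound on $i(G')$, which makes the argument strictly easier. The main obstacle is the boundary case $|S|=7$ with $i(G')=3$: ruling out each of the three $(t_1,t_2)$ sub-configurations requires either a direct induced-matching construction when $n(G)=10$ or a re-marking argument leveraging Claim~\ref{c12} when $n(G)\geq 11$.
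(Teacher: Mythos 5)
Your set-up (mark a triangle edge, count edges into $I'$, get $i(G')\leq 3$, then try to kill the tight case) matches the paper's strategy, and your counting up to $i(G')\leq 3$ is correct. But the proof has a genuine gap exactly where you admit the difficulty lies: the case $|S|=7$, $i(G')=3$ is not actually proved. For $n(G)=10$ you assert that ``a direct inspection of each pattern exhibits an induced matching of size $2$'' without exhibiting one, and you cannot discharge this by citing Claim~\ref{c11}, since that claim (and its proof) only covers order-$10$ graphs with $\delta=3$, not the $4$-regular graphs you are now facing. For $n(G)\geq 11$ the phrase ``either forces a degree-sum impossibility\ldots or a re-marking step\ldots yields a contradiction'' names neither the edge to re-mark nor the reason the new deleted set stays within budget; a re-marking argument only works once you know enough structure around the new edge, and your configuration still allows the Type-$1$ vertices $e,f$ to lie in $I'$, which leaves the structure too loose to pin down.

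The missing idea is the paper's extremal choice of the marked edge: among all triangle edges, first take one lying in the maximum number of triangles (this disposes of $|S|\leq 6$), and then, among those, one lying in the maximum number of $4$-cycles. That second choice is what rules out your Type-$1$ vertices: if some $x\in I'$ were adjacent to the apex $w$ of the triangle, then $x$ would have two neighbours among the private neighbours of $u$ or of $v$, making $uw$ or $vw$ a triangle edge lying in two $4$-cycles while $uv$ lies in at most one (since $G[S']$ carries at most one edge) --- contradicting the choice. With Type~$1$ eliminated, $(t_1,t_2)=(0,3)$ is forced and the three vertices of $I'$ are joined to all of $\{u_1,u_2,v_1,v_2\}$, i.e.\ a complete bipartite $K_{4,3}$ using up every free slot; only then is the re-marking of an edge $xy$ with $x\in I'$, $y\in\{u_1,u_2,v_1,v_2\}$ a genuine contradiction, because one can verify $|N[x]\cup N[y]|=8$ and that $G$ minus this set has no isolated vertices. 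Without the extremal choice (or an equally explicit case analysis of the $(2,1)$ and $(1,2)$ patterns), your argument does not close.
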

\begin{proof}[Proof of Claim \ref{c17}]
For a contradiction, we assume that there is an edge $uv$ which is contained in a triangle.
Choose $uv$ such that it is contained in as many triangles as possible.
We mark $uv$.
If $uv$ is contained in at least two triangles,
then $|S|\leq 6$ and at most $10$ edges join $S$ and $I'$.
Thus $i(G')\leq 2$, which is a contradiction.

Therefore, we assume that $uv$ is contained in one triangle $uvwu$ only.
Moreover, we choose the triangle edge $uv$ such that it is contained in as many $4$-cycles as possible.
Thus $|S|=7$ and at most $14$ edges join $S$ and $I'$.
Hence $i(G')=3$.
Furthermore, $S\setminus\{u,v\}$ induces a graph on at most one edge.
Let $x\in I'$.
If $xw \in E(G)$, then either $uw$ or $vw$ is contained in a triangle and in two $4$-cycles, which is a contradiction to our choice of $uv$.
This implies that $S\setminus \{u,v,w\} \cup I'$ induces a complete bipartite graph.
Let $y\in N(x)$. 
Marking $xy$ instead of $uv$ leads to a contradiction.
\end{proof}

Since we may assume from now on that $G$ is triangle-free,
we use the following notation in the remaining part of the proof.
The marked edge will be denoted by $uv$.
Moreover, let $N(u)=\{v,u_1,u_2,u_3\}$ and $N(v)=\{u,v_1,v_2,v_3\}$.
Note that all these vertices are distinct and that $|S|=8$.
Furthermore, let $S'=S\setminus\{u,v\}$.
This implies that at most $18$ edges join $I'$ and $S$.
For a contradiction, we assume that $i(G')\geq 2$; that is, at least eight edges join $I'$ and $S'$.
Let $I'=\{w_1,w_2,\ldots \}$.
Let $m_{S'}$ be the number of edges in $G[S']$.
If $m_{S'}\geq 6$, then at most six edges join $I'$ and $S'$ and this a contradiction.
Since $G$ is triangle-free, $G[S']$ is bipartite.

\begin{claim}\label{c18}
$\Delta(G[S'])\leq 2$.
\end{claim}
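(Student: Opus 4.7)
My plan is to assume for contradiction that some vertex $x \in S'$ has $d_{G[S']}(x) \geq 3$. Since $G$ is triangle-free, the sets $\{u_1,u_2,u_3\}$ and $\{v_1,v_2,v_3\}$ are each independent (any internal edge would close a triangle through $u$ or $v$), so $G[S']$ is bipartite with these two parts and $x$ must be adjacent to all three vertices on the opposite side. Without loss of generality $x=u_1$ with $u_1\sim v_1,v_2,v_3$. By Claim~\ref{c16} combined with $\Delta(G)\leq 4$, the graph $G$ is $4$-regular, so $N(u_1)=\{u,v_1,v_2,v_3\}$; in particular $u_1$ has no neighbor outside $S$, and hence every $w\in I'$ sends all four of its edges into $X:=\{u_2,u_3,v_1,v_2,v_3\}$. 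Setting $\gamma:=e_G(\{u_2,u_3\},\{v_1,v_2,v_3\})$, a degree count on $X$ gives $4|I'|=e(X,I')\leq 12-2\gamma$, and with $|I'|\geq 2$ this leaves the cases $(|I'|,\gamma)\in\{(3,0),(2,2),(2,1),(2,0)\}$.

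The two \emph{tight} cases fall out quickly. In case $(3,0)$, equality forces $V(G)=S\cup I'$ and $n(G)=11$; moreover, $u_2$ and $u_3$ are each adjacent to every $w\in I'$, while the bipartite subgraph between $\{v_1,v_2,v_3\}$ and $I'$ is $2$-regular and hence a $C_6$. Choosing $w\in I'$ non-adjacent to $v_1$ (the $C_6$ provides one), I verify that $\{u_1v_1,u_2w\}$ is an induced matching of size $2$, giving $\nu_s(G)\geq 2>11/9$, a contradiction. In case $(2,2)$, the analogous tightness gives $n(G)=10$ with $\delta(G)\geq 4$, contradicting Claim~\ref{c11}.

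The remaining cases $(|I'|,\gamma)\in\{(2,1),(2,0)\}$ are where the real work lies. Setting $T:=V(G)\setminus(S\cup I')$, a degree balance $2|E(G[T])|+e(S',T)=4|T|$ with $e(S',T)=4-2\gamma$ gives $|E(G[T])|=2|T|-2+\gamma$, and combining with the Tur\'an bound $|E(G[T])|\leq|T|^2/4$ forces $|T|\geq 7$. Here I consider the pair of independent edges $\{uv,ab\}$ with $ab\in E(G[T])$, which is indeed an induced matching since $u,v$ have no neighbors in $T$. Triangle-freeness yields $|S_{\text{new}}|=8+d_T(a)+d_T(b)$, and every $w\in I'$ becomes isolated in $G'_{\text{new}}$ because $N(w)\subseteq S'\subseteq S_{\text{new}}$; so $i(G'_{\text{new}})\geq 2$. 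By choosing $ab$ so that $d_T(a)+d_T(b)$ is small and $N_T[a]\cup N_T[b]$ covers most of $T$, I aim to verify $|S_{\text{new}}|+i(G'_{\text{new}})\leq 18$, the required contradiction for $k=2$ marked edges.

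The principal obstacle is the verification of this inequality uniformly in $|T|\geq 7$ and in the admissible structures of $G[T]$: when $|T|=7$, Tur\'an equality forces $G[T]=K_{3,4}$ and the bound holds with room to spare (e.g.\ the edge with $d_T(a)+d_T(b)=7$ covers all of $T$, giving $|S_{\text{new}}|+i(G'_{\text{new}})=17$), but as $|T|$ grows $G[T]$ gains freedom and one must argue, using the density relation $|E(G[T])|=2|T|-2+\gamma$ together with the fact that each $t\in T$ satisfies $1\leq d_T(t)\leq 4$, that a suitable edge $ab$ always leaves at most $8-d_T(a)-d_T(b)$ additional isolated vertices in $T\setminus(N_T[a]\cup N_T[b])$.
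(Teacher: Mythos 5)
Your reduction to the cases $(|I'|,\gamma)\in\{(3,0),(2,2),(2,1),(2,0)\}$ is sound, and your treatment of $(3,0)$ is correct and essentially matches the paper's handling of the corresponding subcase ($m_{S'}=3$, $i(G')=3$). But there are two genuine problems. First, in case $(2,2)$ the appeal to Claim~\ref{c11} fails: that claim (and its proof) concerns connected graphs of order $10$ whose minimum degree is exactly $3$ --- the proof only treats graphs containing degree-$3$ vertices --- so it says nothing about a $4$-regular graph of order $10$. Indeed it cannot, because $C_5^2$ itself is a triangle-free $4$-regular graph of order $10$ with $\nu_s=1$, and the $(2,2)$ configuration does realize $C_5^2$ (e.g.\ when the two $\gamma$-edges share an endpoint in $\{v_1,v_2,v_3\}$). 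The correct resolution, which is what the paper does in its $m_{S'}=5$ subcase, is to show that the tight degree count forces either a triangle or $G=C_5^2$ and then contradict $n_5(G)=0$, not any lower bound on $\nu_s$.

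Second, and more seriously, the cases $(2,1)$ and $(2,0)$ --- which carry the substance of the claim --- are not proved. Your plan of marking $uv$ together with an edge $ab$ of $G[T]$ hinges on the inequality that at most $8-d_T(a)-d_T(b)$ vertices of $T\setminus(N_T[a]\cup N_T[b])$ become isolated, and you explicitly leave this unverified for general $|T|\geq 7$ and general structures of $G[T]$ satisfying $|E(G[T])|=2|T|-2+\gamma$. (You also silently discard $|T|\leq 1$ in the Tur\'an computation; this needs the remark that a $T$-vertex with all four neighbors in $S$ would lie in $I'$.) As it stands this is a proof sketch with its hardest step missing. The paper avoids the set $T$ entirely: writing $v_1$ for the vertex of degree $3$ in $G[S']$, it shows that some neighbor $u_i$ of $v_1$ has both of its remaining edges going to $I'$, and then re-marks the single edge $u_iv_1$, whose closed neighborhood is $\{u,v,u_1,u_2,u_3,v_1,w_1,w_2\}$ of size $8$ and creates no new isolated vertices (or, in the $\gamma=1$ subcase, it pairs the re-marked edge with a second independent edge or path found inside $S'\cup I'$). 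You would either need to carry out your $G[T]$ analysis in full or switch to a local re-marking argument of this kind.
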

\begin{proof}[Proof of Claim \ref{c18}]
By symmetry, we assume for contradiction that $v_1$ is adjacent to $u_1,u_2,u_3$.
Suppose first that $m_{S'}\geq 4$ and hence $i(G')=2$.
By symmetry, $u_1v_2\in E(G)$.
If $u_1v_3 \in E(G)$, 
then $i(G')=2$ and $N(w_i)=\{u_2,u_3,v_2,v_3\}$ for $i\in\{1,2\}$.
Thus $G=C_5^2$, which is a contradiction.
Hence we suppose $u_1v_3\notin E(G)$.
We mark instead of $uv$ the edge $u_1v_1$.
If $u_1$ has a neighbor in $I'$, say $w_1$, then $w_2v_3$ is independent of $u_1v_1$, which leads to a contradiction.
If $u_1$ has no neighbor in $I'$, then $w_1v_3w_2$ is a path independent from $u_1v_1$, which leads to a contradiction.

Therefore, we may suppose $m_{S'}= 3$.
If $i(G')=3$, then $n(G)=11$ and $u_1$ has a non-neighbor in $I'$, say $w_1$.
Hence $uu_1$ together with $w_1v_2$ is an induced matching of size~$2$, which is a contradiction.
Thus $i(G')=2$ and four edges join $S'$ and $V(G)\setminus(S \cup I')$.
The fact that $i(G')=2$ and $m_{S'}= 3$ imply that $v_2$ and $v_3$ have at least one neighbor in $V(G)\setminus(S \cup I')$.
By symmetry in $\{u_1,u_2,u_3\}$, we assume that $u_1$ has no neighbor in $V(G)\setminus(S \cup I')$.
Thus marking $u_1v_1$ instead of $uv$ leads to a contradiction.
\end{proof}

\begin{claim}\label{c19}
$G[S']$ contains no two independent edges.
\end{claim}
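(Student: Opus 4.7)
I would argue by contradiction: suppose that $G[S']$ contains two independent edges. Since $G$ is triangle-free by Claim~\ref{c17}, both $\{u_1,u_2,u_3\}\subseteq N(u)$ and $\{v_1,v_2,v_3\}\subseteq N(v)$ are independent sets, so $G[S']$ is bipartite with these two 3-sides; by Claim~\ref{c18} its maximum degree is at most~$2$. After relabeling the indices, I may assume the two independent edges are $u_1v_1$ and $u_2v_2$.

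The plan is to mark $u_1v_1$ instead of $uv$ and show that $|S^{**}|+|I'^{**}|\leq 9$, where $S^{**}=N[u_1]\cup N[v_1]$ and $I'^{**}$ is the set of isolated vertices of $G^{**}:=G-S^{**}$. Triangle-freeness forces $N(u_1)\cap N(v_1)=\emptyset$, so $|S^{**}|=8$ and the target becomes $|I'^{**}|\leq 1$. Writing $N(u_1)=\{u,v_1,a_1,a_2\}$, $N(v_1)=\{v,u_1,b_1,b_2\}$, and $A=\{a_1,a_2,b_1,b_2\}$, observe that $\delta(G)=4$ (Claim~\ref{c16}) implies that the only vertices of $S^{**}$ with neighbors outside $S^{**}$ are $u,v$ and the four vertices in $A$, and a degree-sum count gives exterior degree $16-2t$ for $S^{**}$, where $t$ is the number of edges of $G$ inside $A$. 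Crucially, whenever $u_2,v_2\notin S^{**}$ the edge $u_2v_2$ is preserved in $G^{**}$, so neither $u_2$ nor $v_2$ is isolated there.

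I would next analyze which vertices of $V(G)\setminus S^{**}$ can be isolated in $G^{**}$: any such vertex $x$ has its four neighbors in $S^{**}$, which forces either $x\in\{u_3,v_3\}$ (with its three non-$\{u,v\}$ neighbors sitting inside $A$) or else $N_G(x)\subseteq A$. The external-degree budget $16-2t$, the preserved edge $u_2v_2$, and the same use of Claims~\ref{c11} and~\ref{c12} that appeared in the proofs of Claims~\ref{c14}--\ref{c16} (to rule out the tight configurations forcing $n(G)=10$ or a block of order $10$) should together yield $|I'^{**}|\leq 1$, giving the desired contradiction.

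The main obstacle is the case in which extra edges of $G[S']$ draw $u_2$ or $v_2$ into $S^{**}$; this happens precisely when $G[\{u_1,u_2,v_1,v_2\}]$ contains a ``crossing'' edge $u_1v_2$ or $u_2v_1$, i.e.\ when this induced subgraph is a $P_4$ or a $C_4$. The preserved-edge trick then fails, and I would re-mark a different edge --- natural candidates being $u_2v_2$ itself (interchanging roles), $u_3v_3$ (if it is an edge of $G[S']$), or an edge inside $A$ (when $t\geq 1$) --- and repeat the counting. The $C_4$ subcase is the most rigid: it forces each of $u_1,u_2,v_1,v_2$ to have exactly one further neighbor outside $S$, and combined with $m_{S'}\leq 5$ (which follows from $|I'|\geq 2$) it reduces to a small finite check again involving Claims~\ref{c11} and~\ref{c12}.
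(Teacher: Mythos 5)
There is a genuine gap in your main line of attack. You mark the single edge $u_1v_1$, which fixes $|S^{**}|=8$ and therefore obliges you to prove $i(G-S^{**})\leq 1$; but nothing you write delivers this. The external-degree count only gives $i(G-S^{**})\leq (16-2t)/4$, i.e.\ up to four isolated vertices when $t=0$, and excluding $u_2$ and $v_2$ via the preserved edge $u_2v_2$ (which is correct) still leaves room for two. Concretely, take the subcase $m_{S'}=2$, $i(G')=2$: triangle-freeness forces each $w_i$ to be adjacent to $u_3$, to $v_3$, and to exactly one endpoint of each of the two independent edges, say $w_1\in N(u_1)\cap N(u_2)$ and $w_2\in N(v_1)\cap N(v_2)$. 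Then $w_1,w_2\in A$, so $u_3$ and $v_3$ each already have three neighbors in $S^{**}$; if the fourth neighbor of $u_3$ is the remaining vertex $a_2\in N(u_1)$ and the fourth neighbor of $v_3$ is $b_2\in N(v_1)$, then both $u_3$ and $v_3$ are isolated in $G-S^{**}$ and $|S^{**}|+i(G-S^{**})=10$, no contradiction. Claims~\ref{c11} and~\ref{c12} do not rescue you, because this does not force $n(G)$ near $10$ (the vertices $u_2,v_2$ and the outer neighbors of $a_2,b_2$ survive), and none of your fallback re-markings applies: $u_2v_2$ fails symmetrically, $u_3v_3$ need not be an edge, and $A$ may span no edge.

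A secondary point: your ``main obstacle'' is vacuous. By the paper's definition, two independent edges admit no edge adjacent to both, so the crossing edges $u_1v_2$ and $u_2v_1$ are automatically absent and $u_2,v_2\notin S^{**}$ always; you spend your contingency planning on a case that cannot occur while missing the case that does. The paper's proof sidesteps all of this by marking \emph{both} edges $u_1v_1$ and $u_2v_2$ simultaneously ($k=2$), which raises the budget to $|S''|+i(G'')\leq 18$ and lets a degree count close out the subcases $i(G')=3$ and $i(G')=2$ (the latter split according to $m_{S'}\geq 3$ or $m_{S'}=2$). To keep a single-edge marking you would need to exhibit, in every surviving configuration, a specific alternative edge whose closed neighborhood isolates at most one vertex --- essentially a new argument rather than a routine completion of the one you sketched.
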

\begin{proof}[Proof of Claim \ref{c19}]
By symmetry, we assume that $u_1v_1$ and $u_2v_2$ are independent edges.
This implies that at most $14$ edges join $S'$ and $I'$ and hence $i(G')\in \{2,3\}$.
We now mark $u_1v_1$ and $u_2v_2$ instead of $uv$.
Suppose first that $i(G')=3$.
Let $S''=N[u_1]\cup N[u_2]\cup N[v_1]\cup N[v_2]$ and $G''= G-S''$.
It is easily checked that $i(G'')+|S|\leq 18$, which is a contradiction.

Therefore, we may assume $i(G')=2$.
Suppose that $m_{S'}\geq 3$.
Hence at most four edges join $S$ and $V(G)\setminus(S \cup I')$.
This implies that $|S''\cup S \cup I'|\leq 14$ and at most $12$ edges join $S''\cup S \cup I'$ and $V(G)\setminus (S''\cup S \cup I')$;
that is, $|S''\cup S \cup I'|+i(G'')\leq |S''|+i(G'')\leq 17$, which is a contradiction.

Therefore, we may assume $m_{S'}=2$.
Since $G$ is triangle-free, we conclude that $w_i$ for $i\in \{1,2\}$ is adjacent to $u_3,v_3$ and to exactly one vertex of the two marked edges, respectively.
Thus both $u_3$ and $v_3$ have exactly one neighbor in $V(G)\setminus(S \cup I')$.
Moreover, exactly four edges join $\{u_1u_1, u_2v_2\}$ and $V(G)\setminus(S \cup I')$;
that is, $|S''\cup S \cup I'|\leq 14$ and at most $14$ edges join $S''\cup S \cup I'$ and $V(G)\setminus(S'' \cup S \cup I')$,
which implies the contradiction $|S''\cup S \cup I'|+i(G'')\leq |S''|+i(G'')\leq 17$.
\end{proof}

Claim \ref{c19} implies that $G[S']$ has at most one nontrivial component.
Claims \ref{c17}, \ref{c18} and \ref{c19} imply that the nontrivial component of $G[S']$, if it exists,  is
a $C_4$, a $P_4$, a $P_3$ or a $P_2$.

\begin{claim}\label{c20}
If it exists, then the nontrivial component of $G[S']$ is not a $C_4$.
\end{claim}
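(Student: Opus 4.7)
The plan is to assume, for contradiction, that the nontrivial component of $G[S']$ is a $C_4$, to pin down the rigid local structure of $G$ around $S\cup I'$, and then in three subcases to replace the marking of $uv$ with a carefully chosen independent pair of edges for which the inequality $|S|+i(G-S)\le 18$ succeeds.

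For the structure, since $G$ is triangle-free, $\{u_1,u_2,u_3\}$ and $\{v_1,v_2,v_3\}$ are independent and $G[S']$ is bipartite between them; up to symmetry the $C_4$ is $u_1v_1u_2v_2u_1$ with $u_3,v_3$ isolated in $G[S']$. The intra-$S$ edge count is $m_S=1+3+3+4=11$, so exactly $10$ edges leave $S$. Each vertex of $I'$ accounts for $4$ of these, hence $i(G')\le 2$, and by assumption $i(G')=2$, so $I'=\{w_1,w_2\}$. A short degree count then shows that $u_1,u_2,v_1,v_2$ each have exactly one neighbor in $I'$ and none in $R:=V(G)\setminus(S\cup I')$, while $u_3,v_3$ each have two neighbors in $I'$ and one in $R$; thus $u_3\sim w_1,w_2$ and $v_3\sim w_1,w_2$. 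By triangle-freeness the remaining two neighbors of $w_i$ form a non-edge in $\{u_1,u_2,v_1,v_2\}$, and the only such non-edges are $\{u_1,u_2\}$ and $\{v_1,v_2\}$, so after swapping $w_1\leftrightarrow w_2$ if necessary I may assume $w_1\sim u_1,u_2$ and $w_2\sim v_1,v_2$. Let $a,b\in R$ denote the unique $R$-neighbors of $u_3$ and $v_3$ respectively.

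I then split into three cases. If $a=b$, I mark $\{uu_1,v_3a\}$: this is independent, gives $|S|=13$, and every vertex of $V(G)\setminus S$ lies in $R$ with all four of its neighbors in $R$, of which at most two (the remaining two neighbors of $a$) lie in $S$; so $i(G-S)=0$ and $|S|+i\le 13$. If $a\ne b$ and $ab\in E(G)$, I mark $\{u_3a,vv_3\}$; this is independent because $av_3\notin E$ (as $v_3$'s only $R$-neighbor is $b\ne a$), it gives $|S|=12$, and the only isolated vertices of $G-S$ are $u_1,u_2$ (since $N(u_i)=\{u,v_1,v_2,w_1\}\subseteq S$), so $|S|+i=14$. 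If $a\ne b$ and $ab\notin E(G)$, I mark $\{uu_1,v_3b\}$: now $|S|=14$, and only $a$ can be isolated in $G-S$, precisely when its three $R$-neighbors coincide with those of $b$; thus $i(G-S)\le 1$ and $|S|+i\le 15$. In every case $|S|+i(G-S)\le 18=9\cdot 2$, so by the minimality of $G$ we get $\nu_s(G)\ge 2+(n(G)-|S|-i(G-S))/9\ge n(G)/9$, a contradiction.

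The main obstacle is locating the right second marked edge: the natural candidates $\{u_iw_1,v_jw_2\}$ all fail because the $C_4$-edge $u_iv_j$ links them, forcing one to reach into $R$ via $a$ or $b$. The three-way split on how $a$ and $b$ are positioned is exactly what makes a usable replacement marking available in each subcase; once the local structure is this tightly pinned down, the counting inequalities fall out with a single pass of careful bookkeeping.
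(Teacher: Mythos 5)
Your argument is correct, and the structural half of it coincides exactly with the paper's: both establish $i(G')=2$ and then force $N(w_1)=\{u_1,u_2,u_3,v_3\}$, $N(w_2)=\{u_3,v_1,v_2,v_3\}$ from triangle-freeness and the degree count. Where you diverge is the endgame. The paper, once this structure is pinned down, simply re-marks the \emph{single} edge $u_1v_1$ and observes that the resulting configuration violates the earlier analysis of $G[S']$ (the new $S'=\{u,v,u_2,v_2,w_1,w_2\}$ induces the six edges $uv,uu_2,vv_2,u_2v_2,u_2w_1,v_2w_2$, exceeding the bound $m_{S'}\le 5$ derived before Claim \ref{c18}; the paper attributes this to Claim \ref{c19}), so its proof is one line. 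You instead reach out of $S\cup I'$ via the unique $R$-neighbors $a$ of $u_3$ and $b$ of $v_3$ and mark \emph{two} independent edges, splitting into the cases $a=b$, $ab\in E(G)$, and $ab\notin E(G)$; I verified that each of your chosen pairs is genuinely independent and that the counts $|S|+i(G-S)\le 13, 14, 15$ are right, so each case closes with room to spare against the threshold $18$. What the paper's route buys is brevity and systematic reuse of the already-proved claims applied to a re-marked edge; what your route buys is self-containment --- the contradiction comes from direct counting rather than from re-invoking Claims \ref{c17}--\ref{c19} for a new marked edge --- at the cost of a three-way case analysis. Both are valid proofs of the claim.
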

\begin{proof}[Proof of Claim \ref{c20}]
For a contradiction, we assume that the nontrivial component of $G[S']$ is a $C_4$.
By symmetry, $u_1v_1u_2v_2u_1$ is this $C_4$.
Since at most $10$ edges join $S$ and $I'$, we have $i(G')=2$.
Furthermore, since $G$ is triangle-free and by symmetry in $\{w_1,w_2\}$, we may assume that $N(w_1)=\{u_1,u_2,u_3,v_3\}$ and $N(w_2)=\{u_3,v_1,v_2,v_3\}$.
Marking $u_1v_1$ instead of $uv$ leads to a contradiction to Claim \ref{c19}.
\end{proof}

\begin{claim}\label{c21}
If it exists, then the nontrivial component of $G[S']$ is not a $P_4$.
\end{claim}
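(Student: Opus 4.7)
The plan is to assume the nontrivial component of $G[S']$ is a $P_4$, which after relabeling I take to be $u_1v_1u_2v_2$, and then to derive a contradiction by marking the two independent edges $u_3w_1$ and $vv_1$ for a suitable $w_1\in I'$.

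I will first pin down the forced structure. With $m_{S'}=3$ and every vertex of $S'$ of degree $4$, a degree sum shows that exactly $12$ edges leave $S'$ to $V(G)\setminus S$, so $4\,i(G')\le 12$ and $i(G')\in\{2,3\}$. Since $G$ is triangle-free, each $w\in I'$ has $N(w)\subseteq S'$ an independent set of size $4$, and because $u_3$ and $v_3$ are isolated in $G[S']$ while the $P_4$ has independence number $2$, every such $N(w)$ has the form $\{u_3,v_3\}\cup X$ with $X\in\{\{u_1,u_2\},\{u_1,v_2\},\{v_1,v_2\}\}$. In particular every $w\in I'$ is adjacent to both $u_3$ and $v_3$, and at most one $w\in I'$ is adjacent to $v_1$.

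Since $|I'|=i(G')\ge 2$, I can choose $w_1\in I'$ with $v_1\notin N(w_1)$. The pair $\{u_3w_1,vv_1\}$ is then an induced matching: the four potentially connecting edges $u_3v$, $u_3v_1$, $w_1v$, $w_1v_1$ are non-edges, because $N(u_3)\subseteq\{u\}\cup I'\cup Z$ with $Z:=V(G)\setminus(S\cup I')$, because $N(w_1)\subseteq S'$, and because $v_1\notin N(w_1)$. Let $z_3$ and $z_{v_1}$ denote the unique neighbors in $Z$ of $u_3$ and $v_1$ when they exist (otherwise these slots are filled by vertices already in $S\cup I'$); then
\[
S_{\text{new}}=N[u_3]\cup N[w_1]\cup N[v]\cup N[v_1]\subseteq S\cup I'\cup\{z_3,z_{v_1}\},
\]
so $|S_{\text{new}}|\le 12$.

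Finally, any new isolated vertex of $G-S_{\text{new}}$ must lie in $Z\setminus\{z_3,z_{v_1}\}$ and have all its $Z$-neighbors in $\{z_3,z_{v_1}\}$. Since $z_3$ and $z_{v_1}$ each have at least one neighbor in $S$ (namely $u_3$, respectively $v_1$), each has $Z$-degree at most $3$; a handshake on the edges from the new isolated vertices to $\{z_3,z_{v_1}\}$ then bounds their number by $d_Z(z_3)+d_Z(z_{v_1})\le 6$. Combining, $|S_{\text{new}}|+i(G'_{\text{new}})\le 12+6=18=9\cdot 2$, contradicting the choice of $G$. The main subtlety is the enumeration of size-$4$ independent sets in $G[S']$ and tracking where $v_1$'s fourth neighbor sits in each of the resulting patterns for $N(w_1)$; once those are in hand, the handshake count at the end is routine, and the earlier claims cover incidental points such as $n_5(G'_{\text{new}})=0$.
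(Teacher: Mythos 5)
Your proof is correct, and it takes a genuinely different route from the paper's. The paper handles the two cases $i(G')=3$ and $i(G')=2$ separately: for $i(G')=3$ it exhibits an induced matching of size $2$ in the resulting order-$11$ graph directly, and for $i(G')=2$ it re-marks a \emph{single} edge ($uu_1$ if an endvertex of the $P_4$ sees both vertices of $I'$, and otherwise the middle edge of the $P_4$), relying in each sub-case on $u_3$ or $v_3$ having a neighbor outside $S\cup I'$. You instead mark \emph{two} independent edges $u_3w_1$ and $vv_1$ uniformly (the same $k=2$ device the paper uses in Claim \ref{c19}) and verify $|S_{\mathrm{new}}|+i(G'_{\mathrm{new}})\le 18$ once and for all. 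Your key structural observations check out: every $w\in I'$ has $N(w)\subseteq S'$ independent of size $4$, hence contains $\{u_3,v_3\}$ plus one of the three independent pairs of the $P_4$, so at most one $w\in I'$ sees $v_1$ and a suitable $w_1$ exists; the four cross pairs between $\{u_3,w_1\}$ and $\{v,v_1\}$ are non-edges by triangle-freeness, the structure of $G[S']$, and $N(w_1)\subseteq S'$; and since every vertex of $Z=V(G)\setminus(S\cup I')$ has no neighbor in $I'$ and at least one neighbor outside $S$, each new isolated vertex must indeed be adjacent to $z_3$ or $z_{v_1}$, giving the handshake bound of $6$. What your approach buys is the elimination of the paper's sub-case analysis (and it absorbs the $i(G')=3$ case for free, since then $Z=\emptyset$); what it costs is the slightly more delicate bookkeeping of $S_{\mathrm{new}}$ and of the freshly isolated vertices, which the paper's single-edge re-markings avoid. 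Both arguments sit inside the same minimal-counterexample framework, so the incidental points ($n_5=0$ for the residual graph, extendability of the matching) are handled identically.
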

\begin{proof}[Proof of Claim \ref{c21}]
For a contradiction, we assume that the nontrivial component $P$ of $G[S']$ is a $P_4$.
By symmetry, $u_1v_2u_2v_1$ is this $P_4$; that is,
at most $12$ edges join $S$ and $I'$ and hence $i(G')\in \{2,3\}$.
Suppose first $i(G')=3$.
This implies that $n(G)=11$ and $N(u_3)=\{w_1,w_2,w_3,u\}$; by symmetry, say $w_1$ is nonadjacent to $u_2$ and $v_2$.
Since $\{u_3w_1,u_2v_2\}$ is an induced matching of size $2$, which is a contradiction, 
we may assume that $i(G')=2$;
that is, exactly four edges join $S$ and $V(G)\setminus(S \cup I')$.
Since $w_i$ for $i\in\{1,2\}$ has at most two neighbors in $P$,
we conclude that $w_i$ is adjacent to $u_3$ and $v_3$.
Moreover,
$u_3$ and $v_3$ have a neighbor in $V(G)\setminus(S \cup I')$.
Suppose at least one of the endvertices of $P$ is adjacent to both $w_1$ and $w_2$, say $u_1$.
Marking $uu_1$ instead of $uv$ leads to a contradiction because $v_3$ has a neighbor in $V(G)\setminus(S \cup I')$.
Thus we may assume that $u_2$ and $v_2$ have a neighbor in $I'$.
Then, marking $u_2v_2$ instead of $uv$ leads to a contradiction because $u_3$ and $v_3$ have a neighbor in $V(G)\setminus(S \cup I')$.
\end{proof}

In the following we choose $uv$ such that $m_{S'}$ is maximal.

\begin{claim}\label{c22}
If it exists, then the nontrivial component of $G[S']$ is not a $P_3$.
\end{claim}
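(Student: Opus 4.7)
The plan is to follow the pattern of Claim~\ref{c21}. Suppose for contradiction that the nontrivial component of $G[S']$ is a $P_3$. Since $G$ is triangle-free, $G[S']$ is bipartite between $\{u_1,u_2,u_3\}$ and $\{v_1,v_2,v_3\}$, so by symmetry this $P_3$ may be taken to be $u_1v_1u_2$; hence $m_{S'}=2$. A routine degree count then shows at most $14$ edges leave $S'$ towards $V(G)\setminus S$, so $i(G')\in\{2,3\}$. Throughout I write $N(u_1)=\{u,v_1,\alpha,\beta\}$, $N(u_2)=\{u,v_1,\delta,\epsilon\}$, and $N(v_1)=\{v,u_1,u_2,\gamma\}$.

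First I would exploit the maximality of $m_{S'}$. Triangle-freeness gives $N(u_1)\cap N(v_1)=\emptyset$, so marking $u_1v_1$ in place of $uv$ produces the set $S^{*}=\{u_1,v_1,u,v,u_2,\alpha,\beta,\gamma\}$ of size $8$. The triangle-free $P_3$ structure of $G[S']$ guarantees $uv, uu_2\in E(G[S^{*}\setminus\{u_1,v_1\}])$. Any of the edges $u_2\alpha$, $u_2\beta$, $\alpha\gamma$, $\beta\gamma$ would then push $m_{S^{*}\setminus\{u_1,v_1\}}\geq 3>m_{S'}$, contradicting the maximal choice of $uv$. Thus $u_2\alpha,u_2\beta,\alpha\gamma,\beta\gamma\notin E(G)$; remarking $u_2v_1$ analogously yields $u_1\delta,u_1\epsilon,\delta\gamma,\epsilon\gamma\notin E(G)$. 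In particular $\alpha,\beta,\gamma,\delta,\epsilon$ are pairwise distinct vertices of $V(G)\setminus S$.

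Next I would analyze $I'$. Each $w\in I'$ has $|N(w)|=4$ with $N(w)\subseteq S'$; triangle-freeness forbids both $\{u_1,v_1\}\subseteq N(w)$ and $\{u_2,v_1\}\subseteq N(w)$, and the nonedges from the preceding paragraph leave only three possibilities: either $w\in\{\alpha,\beta\}$ with $N(w)=\{u_1,u_3,v_2,v_3\}$, or $w\in\{\delta,\epsilon\}$ with $N(w)=\{u_2,u_3,v_2,v_3\}$, or $w=\gamma$ with $N(\gamma)=\{v_1,u_3,v_2,v_3\}$. For $i(G')=3$ I enumerate the allocations of the three $w_j$ among these at most five candidates; in every subcase I identify a vertex $x\in\{\alpha,\beta,\gamma,\delta,\epsilon\}\setminus I'$ and show that its admissible neighbors in $V(G)\setminus S$ are all blocked---by the Step-two nonedges together with the fixed neighborhoods of the $I'$ vertices---forcing $d_G(x)<4$ and contradicting $\delta(G)\geq 4$.

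For $i(G')=2$ the same enumeration leaves at least one vertex $x\in\{\alpha,\beta,\gamma,\delta,\epsilon\}\setminus I'$; the constraints imply that all three of $x$'s non-$S$ neighbors lie in $V(G)\setminus(S\cup I')$, so $x$ is incident to some edge $e$ whose endpoints both lie outside $S$. Since $N(u)\cup N(v)\subseteq S$, the pair $\{uv,e\}$ is an induced matching of size $2$. Simultaneously the enumeration bounds $|V(G)\setminus(S\cup I')|\leq 6$, whence $n(G)\leq 16$ and $\nu_s(G)\geq 2>n(G)/9$, contradicting the choice of $G$. The main obstacle will be the case $i(G')=3$, where several candidate allocations of the three $I'$ vertices among $\alpha,\beta,\gamma,\delta,\epsilon$ require separate treatment, each requiring the Step-two nonedges to be combined with the explicit neighborhoods of $I'$ to pin down the degree contradiction.
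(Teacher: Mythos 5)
Your setup is sound and overlaps with the paper's: you correctly reduce to the $P_3$ being $u_1v_1u_2$, correctly use the maximality of $m_{S'}$ under re-marking $u_1v_1$ (and $u_2v_1$) to derive the non-edges $u_2\alpha,u_2\beta,\alpha\gamma,\beta\gamma,u_1\delta,u_1\epsilon,\delta\gamma,\epsilon\gamma$, and correctly conclude that every $w\in I'$ must be one of $\alpha,\beta,\gamma,\delta,\epsilon$ with the neighborhoods you list. The problem is the endgame in both of your cases. For $i(G')=3$, the promised conclusion $d_G(x)<4$ for some $x\in\{\alpha,\beta,\gamma,\delta,\epsilon\}\setminus I'$ does not follow: the Step-two non-edges and the fixed $I'$-neighborhoods only block $x$ from $S'$, from $I'$, and from the other four named vertices, but say nothing about neighbors of $x$ in $V(G)\setminus(S\cup I')$, which is completely unconstrained (e.g.\ with $I'=\{\alpha,\beta,\gamma\}$ the vertex $\delta$ happily has $u_2$ plus three neighbors deeper in the graph). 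For $i(G')=2$, the bound $|V(G)\setminus(S\cup I')|\leq 6$ is false in general: counting the at most six edges leaving $S'$ towards $V(G)\setminus(S\cup I')$ only bounds the number of such vertices \emph{adjacent to} $S$, not the order of $G$, so you cannot conclude $n(G)\leq 16$; and an induced matching $\{uv,e\}$ of size $2$ is not by itself a contradiction in a large graph unless you delete the closed neighborhoods of both edges and re-apply minimality as in the proof of Claim~\ref{c19}, which you do not do.

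The paper closes the argument differently and you should note why it works at any order of $G$: it shows that \emph{none} of $u_1,u_2,v_1$ can have a neighbor in $I'$ at all. If $\gamma\in I'$, then $N(\gamma)=\{v_1,u_3,v_2,v_3\}$ and re-marking $vv_1$ produces a set $S^*$ with $G[S^*\setminus\{v,v_1\}]$ containing the independent edges $uu_1$ and $\gamma v_2$, contradicting Claim~\ref{c19} (equivalently, $m_{S^*{}'}\geq 4>m_{S'}$); if $u_1$ (or $u_2$) has a neighbor $w_1\in I'$, then $w_1$ is adjacent to $u_2$ or $u_3$ and re-marking $uu_1$ yields at least three edges inside the new $S'$, again contradicting Claims~\ref{c18}--\ref{c21}. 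Once $u_1,u_2,v_1$ have no neighbors in $I'$, any $w\in I'$ would need its four neighbors inside $\{u_3,v_2,v_3\}$, an immediate contradiction with $4$-regularity. To repair your write-up you would have to replace both of your concluding counts with re-marking arguments of this kind (or with a Claim~\ref{c19}-style double-deletion), at which point you would essentially be reproducing the paper's proof.
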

\begin{proof}[Proof of Claim \ref{c22}]
For a contradiction, we assume that the nontrivial component $P$ of $G[S']$ is a $P_3$.
By symmetry, $u_1v_1u_2$ is this $P_3$.
If $v_1$ has a neighbor in $I'$, say $w_1$, then $N(w_1)=\{u_3,v_1,v_2,v_3\}$ because $G$ is triangle-free.
The neighborhood of $vv_1$ induces a graph of size at least $4$, which is a contradiction to the previous claims.
Thus we may assume that $v_1$ has a neighbor in $V(G)\setminus(S \cup I')$.

If $u_1$ or $u_2$, say $u_1$ has a neighbor in $I'$, say $w_1$,
then since $w_1$ is adjacent to $u_2$ or $u_3$,
neighborhood of $uu_1$ induces a graph on at least three edges, which is a contradiction to the previous claims.
Thus we may assume that both $u_1$ and $u_2$ have two neighbors in $V(G)\setminus(S \cup I')$.

Since $G$ is $4$-regular, $w_1$ is adjacent to at least one vertex in $P$, which is a contradiction to our assumptions.
\end{proof}

\begin{claim}\label{c23}
If it exists, then the nontrivial component of $G[S']$ is not a $P_2$.
\end{claim}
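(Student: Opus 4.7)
My plan is to follow the common pattern of the preceding claims: re-mark a suitable edge and contradict the maximality of $m_{S'}$. Assume for contradiction that the nontrivial component of $G[S']$ is a $P_2$; by symmetry the edge is $u_1v_1$, so $m_{S'}=1$. Since $G$ is triangle-free and $u_1v_1\in E(G)$, no vertex of $I'$ can be adjacent to both $u_1$ and $v_1$. I would split the analysis according to whether some vertex of $I'$ is adjacent to $\{u_1,v_1\}$.

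In the first case, assume that neither $u_1$ nor $v_1$ has a neighbor in $I'$. Then for every $w\in I'$ the four neighbors of $w$ are forced to lie in $\{u_2,u_3,v_2,v_3\}$, so $N(w)=\{u_2,u_3,v_2,v_3\}$; in particular $u_2$ is adjacent to every vertex of $I'$. I would pick $w_1\in I'$ and re-mark the edge $u_2w_1$. Then $S''=N[u_2]\cup N[w_1]$ satisfies $|S''|=8$, and because $i(G')\geq 2$ there is $w_2\in I'\cap N(u_2)$ with $w_2\neq w_1$, lying in $S''\setminus\{u_2,w_1\}$ together with its neighbors $u_3,v_2,v_3$. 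This yields the three edges $u_3w_2,v_2w_2,v_3w_2$ and, together with $uu_3$, at least four edges in the corresponding induced subgraph, contradicting the maximality of $m_{S'}$.

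In the second case, by symmetry assume $w\in I'\cap N(u_1)$. Re-mark $uu_1$; then $S''=N[u]\cup N[u_1]=\{u,v,u_1,v_1,u_2,u_3,w,x\}$, where $x$ is the other outside neighbor of $u_1$, so $|S''|=8$. In the graph induced on $S''\setminus\{u,u_1\}=\{v,v_1,u_2,u_3,w,x\}$ the edge $vv_1$ is present, and since $w$ has three neighbors in $\{u_2,u_3,v_2,v_3\}$, it is adjacent to at least one of $u_2,u_3$, giving a second edge. These two edges are independent: $v_1w\notin E(G)$ by the forbidden triangle $u_1v_1w$, and neither $v$ nor $v_1$ has a neighbor in $\{u_2,u_3\}$ because $N(v)=\{u,v_1,v_2,v_3\}$ and $u_1v_1$ is the only edge of $G[S']$. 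Thus this induced subgraph has at least two edges, again contradicting the maximality of $m_{S'}$.

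The main obstacle is that the ``natural'' re-marking $uu_1$ used in the second case fails in the first, because neither endpoint of $u_1v_1$ then has an $I'$-neighbor to supply a second edge. The remedy is to re-mark from the other side using an edge $u_2w_1$ into $I'$, exploiting the very rigid structure $N(w)=\{u_2,u_3,v_2,v_3\}$ forced on every $w\in I'$ in the first case. Once the correct re-marking is identified, the remainder is bookkeeping with the triangle-free property and the fact that $u_1v_1$ is the unique edge of $G[S']$.
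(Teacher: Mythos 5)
Your proof is correct and follows essentially the same route as the paper's: you show that if an endpoint of the $P_2$ has a neighbor in $I'$ then re-marking $uu_1$ produces at least two edges in the new $S'$-set (the paper phrases this as $w_1$ being forced to have degree at most $3$), and otherwise every $w\in I'$ has $N(w)=\{u_2,u_3,v_2,v_3\}$ and re-marking $u_2w_1$ does the job, both contradicting the maximality of $m_{S'}$. The only difference is cosmetic: the independence check for the two edges in your second case is unnecessary, since exceeding $m_{S'}=1$ already suffices.
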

\begin{proof}[Proof of Claim \ref{c23}]
For a contradiction, we assume that the nontrivial component of $G[S']$ is an edge.
By symmetry, let $u_1v_1$ be this edge.
If $u_1$ or $v_1$, say $u_1$, is adjacent to a vertex in $I'$, say $w_1$,
then $w_1$ is nonadjacent to $u_2,u_3,v_1$ according to our choice of $uv$, which is a contradiction
to the $4$-regularity of $G$.
This implies that the neighborhood of every vertex in $I'$ is $\{u_2,u_3,v_2,v_3\}$.
Since $i(G')\geq 2$,
the neighborhood of the edge $u_2w_1$ induces a graph on at least two edges, which is a contradiction to our choice of $uv$.
\end{proof}

Claims \ref{c17}-\ref{c23} imply that $G$ has girth at least $5$.
This implies that every vertex in $V(G)\setminus S$ has at most two neighbors in $S$ and hence $i(G')=0$, which is the final contradiction.
\qed

\end{document}